\newtheorem{theorem}{Theorem}[section]
\newtheorem{lemma}{Lemma}[section]
\newtheorem{proposition}{Proposition}[section]
\newtheorem{remark}{Remark}[section]
\newtheorem{definition}{Definition}[section]
\newcommand{\assign}{\coloneqq}
\newcommand{\reals}{\mathbb{R}}
\newcommand{\inner}[2]{\langle{#1},{#2}\rangle}
\newcommand{\argmin}{\operatorname{argmin}}
\newcommand{\dist}{\operatorname{dist}}
\newcommand{\D}{\operatorname{D}}
\renewcommand{\d}{\operatorname{d}}
\newcommand{\cO}{\mathcal{O}}
\newcommand{\cX}{\mathcal{X}}
\newcommand{\cK}{\mathcal{K}}
\newcommand{\cH}{\mathcal{H}}
\newcommand{\ee}{\mathrm{e}}
\newcommand{\plgd}{\texttt{PolyakGD}}
\newcommand{\polyak}{$\gamma$-\texttt{PolyakGD}}
\newcommand{\hd}{H\"older}
\newcommand{\mathsym}[1]{{}}
\newcommand{\unicode}[1]{{}}
\title{New Results on the Polyak Stepsize: Tight Convergence Analysis and Universal Function Classes}
\author{
{Chang He} \thanks{The first two authors contribute equally. School of Information Management and Engineering, Shanghai University of Finance and Economics; Department of Industrial and Systems Engineering, University of Minnesota. \texttt{ischanghe@gmail.com}}
\and
{Wenzhi Gao} \thanks{ICME, Stanford University. \texttt{gwz@stanford.edu}}
\and
{Bo Jiang} \thanks{School of Information Management and Engineering, Shanghai University of Finance and Economics. \texttt{isyebojiang@gmail.com}}
\and
{Madeleine Udell} \thanks{ICME, Stanford University. \texttt{udell@stanford.edu}}
\and
{Shuzhong Zhang} \thanks{Department of Industrial and Systems Engineering, University of Minnesota. \texttt{zhangs@umn.edu}}
}
\begin{document}
\maketitle
\begin{abstract}

In this paper, we revisit a classical adaptive stepsize strategy for gradient descent: the Polyak
stepsize~(\plgd), originally proposed in \cite{polyak1969minimization}. We study the
convergence behavior of {\plgd} from two perspectives: tight worst-case analysis and universality across function classes. As our first main result, we establish the tightness of the known convergence rates of {\plgd} by explicitly constructing worst-case functions. In particular, we show that the $\cO((1-\frac{1}{\kappa})^K)$ rate for smooth strongly convex functions and the $\cO(1/K)$ rate for smooth convex functions are both tight. Moreover, we theoretically show that {\plgd} automatically exploits floating-point
errors to escape the worst-case behavior. Our second main result provides new convergence guarantees for {\plgd} under both
{\hd} smoothness and {\hd} growth conditions. These findings show that the
Polyak stepsize is universal, automatically adapting to various function classes without requiring prior knowledge of problem parameters.

\end{abstract}
\section{Introduction}
The Polyak stepsize is a stepsize schedule for (sub)gradient descent in convex optimization, originally proposed by Boris T. Polyak~\citep{polyak1969minimization,polyak1987introduction}. It was initially designed for the subgradient method in nonsmooth optimization, and has recently been extended to more general settings~\citep{hazan2019revisiting,loizou2021stochastic,wang2023generalized,deng2024uniformly, devanathan2024polyak}. The Polyak stepsize requires knowledge of the optimal function value $f^\star$ and is defined as
\begin{equation}\label{eq:polyak stepsize}
    \alpha_k = \frac{f(x^k) - f^\star}{\|\nabla f(x^k)\|^2}, \ k = 0, 1, \ldots, K-1.
\end{equation}
{\plgd} automatically adjusts the stepsize according to the position of the current iterate $x^k$ on the optimization landscape. The resulting stepsizes
are typically much larger than standard constant or diminishing stepsizes, making {\plgd}~more aggressive and often yielding superior practical performance
\citep{zamani2024exact}. Consequently, the Polyak stepsize is widely used in settings
where the optimal function value $f^\star$ is known, such as convex feasibility
problems and over-parameterized machine learning models
\citep{boyd2013subgradient,loizou2021stochastic}. Even when $f^\star$ is not known
a priori, {\plgd} remains highly effective, provided that a reasonable estimate of the optimal value is available~\citep{polyak1969minimization,hazan2019revisiting,zheng2024adaptive}.

From a theoretical perspective, the convergence of the Polyak stepsize has been
extensively studied in nonsmooth convex optimization.~\cite{polyak1987introduction}
proved that the subgradient method with the Polyak stepsize satisfies
$\lim_{k \to \infty} \sqrt{k}\,(f(x^k) - f^\star) = 0$. Moreover, the convergence rate
of the best-iterate\footnote{By \emph{best-iterate}, we refer to $\min_{1 \le k \le K} (f(x^k) - f^\star)$; by \emph{last-iterate}, we refer to
$f(x^K) - f^\star$.} is $\cO(1/\sqrt{K})$ (see, e.g.,~\cite{boyd2013subgradient}).
Beyond best-iterate guarantees,~\cite{zamani2024exact} analyzed the last-iterate
behavior of the Polyak stepsize, establishing an $\cO(1/\sqrt[4]{K})$ convergence rate
and providing a worst-case function to match this rate. Their analysis uses the methodology based on performance
estimation problem (PEP)~\citep{drori2014performance,taylor2017smooth}, building on earlier work
\citep{zamani2025exact}\footnote{\cite{zamani2024exact,zamani2025exact} employ PEP to
obtain numerical dual multipliers that guide the derivation of analytical expressions
for their proofs.}. Additionally,~\cite{criscitiello2023curvature} provided a worst-case function for subgradient method with the Polyak stepsize on Riemannian manifolds. More recently,~\cite{orabona2025new} showed that the Polyak stepsize can be interpreted
as applying the subgradient method to a specific surrogate function, providing
an alternative justification for the necessity of acquiring $f^\star$. In contrast to the well-studied nonsmooth setting, the smooth setting remains largely unexplored. The primary reference is the
note~\cite{hazan2019revisiting} by Hanzan and Kakade, who established convergence guarantees for {\plgd} on both smooth convex and smooth strongly convex objectives. In the
smooth convex setting, they proved an $\cO(1/K)$ convergence rate for the best-iterate,
matching the rate of constant-stepsize gradient descent.

Although {\plgd} is adaptive and often exhibits superior practical performance, the existing $\cO(1/K)$ convergence rate does not exceed that of gradient descent with a constant stepsize. It naturally motivates the following question:
\begin{center}
    \textit{Is the existing upper bound for the Polyak stepsize tight?}
\end{center}
Two main approaches to tight convergence analysis have been developed in the literature. One is to construct worst-case functions. For example,~\cite{drori2014performance,teboulle2023elementary} designed the Huber loss to justify the tightness of the $\cO(1/K)$ rate for constant-stepsize gradient descent. The other one is PEP, which works well for designing a fixed stepsize schedule~\citep{altschuler2025acceleration,grimmer2024provably,rubbens2025performance}. To our best knowledge, there are only two papers on the tight convergence of the Polyak stepsize, motivated from the PEP-based methodology. They focus on the strongly convex case and study the variants of the original Polyak stepsize~\citep{barre2020complexity,huang2024analytic}. PEP remains applicable in the strongly convex setting, as the contraction ratio between consecutive iterations can be explicitly analyzed. However, when the objective function is not strongly convex, PEP does not readily extend to adaptive stepsizes~\citep{goujaud2023fundamental,goujaud2024pepit}. 

Our paper provides a two-fold answer to the above question. On the one hand, we show that the known convergence rates for {\plgd} are indeed tight by constructing worst-case functions. On the other hand, we demonstrate that under floating-point arithmetic, {\plgd} can automatically leverage numerical error to escape this worst-case behavior. This interesting phenomenon explains the superior empirical performance of {\plgd} in practice. Theoretically, the strong performance of {\plgd} is attributed to its inherent adaptivity, a crucial property for modern optimization methods~\citep{malitsky2020adaptive,gao2024gradient,li2025simple,he2025history}.~\cite{hazan2019revisiting} showed that the Polyak stepsize is simultaneously adaptive
to both smooth and nonsmooth regimes, encompassing general convex as well as strongly
convex objectives.~\cite{mishkin2024directional} further established that it adapts to
directional smoothness. More recently,~\cite{orabona2025new} demonstrated that the
Polyak stepsize also adapts to the curvature of a surrogate function induced by the
original objective. Inspired by these results, we further advance this line of research and provide a comprehensive answer to the following question: 
\begin{center}
    \textit{Can the adaptivity of the Polyak stepsize be further extended?}
\end{center}
\paragraph{Contributions} This work answers the above two questions. To address the first question, we construct a simple two-dimensional quadratic function. Our proof strategy relies on choosing a specific initial point such that the Polyak stepsize reduces to a constant stepsize along the trajectory in the strongly convex case. Subsequently, we derive the worst-case functions for the general convex and {\hd} smooth cases by adapting this quadratic function. Based on this construction, we establish the tightness of the linear convergence rate for strongly convex functions, the $\cO(1/K)$ rate for general convex functions, and the $\cO(1/K^{(\nu + 1)/2})$ rate for $\nu$-{\hd} smooth objectives. We then show that under floating-point arithmetic, {\plgd} escapes this worst-case behavior.

We manage to address the second question by demonstrating that the Polyak stepsize is universal across various function classes. We establish that {\plgd} simultaneously adapts to both {\hd} smoothness and {\hd} growth conditions (Theorem \ref{thm:Holder smooth growth}). In particular, when only the {\hd} growth condition holds, {\plgd} achieves the optimal convergence rate; when the function is {\hd} smooth, it matches the rate of
the universal gradient method of~\cite{nesterov2015universal}. Furthermore, we show
that the Polyak stepsize adapts to the recently proposed intrinsic global curvature
bound~\cite{nesterov2025universal}, and that our universal analysis extends to the
stochastic setting under the interpolation condition. A summary of these results is
provided in Table~\ref{table:complexity}.

\begin{table}[!h]
\centering
\caption{Upper/lower bounds of {\plgd}.}
\resizebox{0.92\textwidth}{!}{
\begin{tabular}{ccc}
    \toprule
    Function class 
        & Upper bound (rate) 
        & Lower bound (rate) \\
    \midrule

    \multirow{2}{*}{$L$-smooth}
        & $\mathcal{O}(1/K)$ 
        & $\Omega(1/K)$ \\
        & {\small \cite{hazan2019revisiting}} 
        & {\small Theorem \ref{thm:smooth-cvx-lb}} \\[2mm]

    \multirow{2}{*}{$L$-smooth $\mu$-strongly convex}
        & $\mathcal{O}((1 - \frac{1}{\kappa})^K)$ 
        & $\Omega((1 - \frac{1}{\kappa})^K)$ \\
        & {\small \cite{hazan2019revisiting}} 
        & {\small Theorem \ref{thm:lb-strongcvx}} \\[2mm]

    \midrule

    \multirow{2}{*}{$\nu$-H{\"o}lder smooth}
        & $\mathcal{O}(K^{-(\nu+1)/2})$ 
        & $\Omega(K^{-(\nu+1)/2})$ \\ 
        & {\small \cite{orabona2025new}} 
        & Theorem \ref{thm:Holder smooth-cvx-lb} \\[2mm]

    \multirow{2}{*}{$r$-H{\"o}lder growth}
        & $\mathcal{O}(K^{-\, r/(2(r-1))})$ 
        & $\Omega(K^{-\, r/(2(r-1))})$ \\
        & {\small Theorem \ref{thm:Holder smooth growth}} 
        &  \cite{nemirovskii1985optimal} \\[2mm]

    \multirow{2}{*}{$\nu$-H{\"o}lder smooth + $r$-H{\"o}lder growth}
        & $\mathcal{O}(K^{-\, r(\nu+1)/(2(r-\nu-1))})$ 
        & \multirow{2}{*}{---} \\  
        & {\small Theorem \ref{thm:Holder smooth growth}} 
        &  \\[2mm]               

    \bottomrule
\end{tabular}
}
\label{table:complexity}
\end{table}

\section{Preliminaries}\label{section:preliminary}
We consider the following (un)constrained optimization problem
\begin{equation}\label{eq:main}
        \min_{x \in \reals^n} \ f(x), 
\end{equation}
where the objective function $f: \reals^n \rightarrow \reals$ is convex and differentiable\footnote{Our analysis can be extended to the projected (sub)gradient method for the constrained setting $\min_{x \in \cX} f(x)$ with a closed convex constraint set $\cX$ and $f$ is nonsmooth.}. We assume that the optimal solution set $\cX^\star$ is nonempty and the optimal function value $f^\star = f(x^\star)$ is finite. Besides, we assume that $f^\star$ is known. Addressing the scenario where $f^\star$ is unknown is beyond the scope of this paper, and the techniques developed in \cite{polyak1969minimization,hazan2019revisiting} can be applied to estimate $f^\star$ at the cost of an additional $\log(1/\varepsilon)$ factor in the overall complexity.

\vspace{10pt}
\begin{algorithm}[H]
{\textbf{input} initial point $x^0$, optimal value $f^\star$, scaling parameter $\gamma \in (0, 2]$}

\For{$k = 0, 1, \dots$}{
$\alpha_k = \frac{f(x^k) - f^\star}{\|\nabla f(x^k)\|^2}$\\
$x^{k + 1} = x^k - \gamma \cdot \alpha_k \nabla f (x^k)$\\
}
\textbf{output}
\caption{\polyak \label{alg:polyakgd}}
\end{algorithm}
\vspace{10pt}

Throughout the paper, we study the convergence behavior of gradient descent with a scaled Polyak stepsize (Algorithm \ref{alg:polyakgd}), where $\gamma \in (0, 2]$ is a scaling parameter; we refer to this update as \polyak. Clearly, setting $\gamma = 1$ recovers the standard Polyak stepsize gradient descent. For the upper curvature bound, we adopt the {\hd} smoothness condition:
\begin{definition} \label{def:smooth}
   A differentiable function $f$ is $(\nu,L_\nu)$-{\hd} smooth if it satisfies
   \begin{equation}\label{eq:Holder smooth}
    \|\nabla f(x) - \nabla f(y)\| \le L_\nu \|x - y\|^\nu, \ \nu \in (0,1], \ L_\nu > 0, \text{ for all } x, y \in \reals^n.
\end{equation}
\end{definition} 
Specifically, the case $\nu = 1$ corresponds to the standard $L$-smoothness condition. The $(\nu, L_\nu)$-{\hd} smoothness condition implies the following two inequalities:
\begin{equation}\label{eq:Holder smooth inequalities}
    \begin{aligned}
        f(y) &\le f(x) + \inner{\nabla f(x)}{y - x} + \frac{L_\nu}{\nu+1}\|y - x\|^{\nu+1}, \ \text{ for all } x, y \in \reals^n, \\
       \|\nabla f(x)\|^{\frac{\nu + 1}{\nu}} &\le \frac{\nu + 1}{\nu} L_\nu^{\frac{1}{\nu}} (f(x) - f^\star), \ \text{ for all } x \in \reals^n,   
    \end{aligned}
\end{equation}
with proofs provided in \cite{orabona2023normalized}. For the lower curvature bound, we adopt the following growth condition:
\begin{definition} \label{def:growth}
   A continuous function $f$ satisfies the $(r, \rho_r)$-{\hd} growth condition on a nonempty set $\cK \subseteq \reals^n$ if
    \begin{equation}\label{eq:Holder growth}
        f(x) - f^\star \ge \rho_r \dist(x, \cX^\star)^r, \ r \ge 1, \ \rho_r > 0, \ \text{ for all } x \in \cK,
    \end{equation}
    where $\dist(x, \cX^\star) \coloneqq\inf_{x^\star \in \cX^\star} \ \|x - x^\star\|$.
\end{definition}
Note that the first inequality in Equation \eqref{eq:Holder smooth inequalities} implies 
\begin{align*}
    f(x) - f^\star \le \frac{L_\nu}{\nu+1}\dist(x, \cX^\star)^{\nu+1}, \ \text{ for all } x \in \reals^n.
\end{align*}
Combining this with the $(r, \rho_r)$-{\hd} growth condition on the set $\cK$, we have
\begin{align*}
   \rho_r \dist(x, \cX^\star)^r \le \frac{L_\nu}{\nu+1}\dist(x, \cX^\star)^{\nu+1}, \ \text{ for all } x \in \cK. 
\end{align*}
This implies that 
\begin{equation}\label{eq:relation of nu and rho}
    \frac{\rho_r(\nu+1)}{L_\nu} \le \dist(x, \cX^\star)^{(\nu+1) - r} , \ \text{ for all } x \in \cK.
\end{equation}
Consequently, several subtle points deserve attention. First, we require that $\cX^\star \subseteq \cK$, otherwise the {\hd} growth condition does not seem intuitive. As a result, we exclude the case $r < \nu + 1$, as choosing $x$ sufficiently close to $\cX^\star$ would lead to a contradiction with the inequality \eqref{eq:relation of nu and rho}. Moreover, we require that $\sup_{x \in \cK} \dist(x, \cX^\star) < \infty$. Otherwise, the case $r > \nu + 1$ would also yield a contradiction by taking $x$ such that $\dist(x, \cX^\star) \rightarrow \infty$ (see Section 1.2 of \cite{roulet2017sharpness} for a similar argument). Fortunately, as discussed at the beginning of Section \ref{section:universality}, {\polyak} (Algorithm~\ref{alg:polyakgd}) naturally ensures the existence of such a proper set $\cK$ due to the Fej\'{e}r monotonicity.
\section{Tight Convergence of {{\polyak}}}\label{section:tight convergence}
In this section, we study the tightness of the convergence rates of {\polyak} for various function classes. Taking the class of convex $L$-smooth functions as an example, the formal problem is: given a total iteration count $K$, can we find a convex and $L$-smooth function $f$ and an initial point $x^0$ such that the trajectory generated by {\polyak} satisfies
\begin{align*}
    \min_{1 \le k \le K} \ \frac{f(x^k) - f^\star}{L \|x^0 - x^\star\|^2} \ge \Omega\Big(\frac{1}{K}\Big)?
\end{align*} 
Consequently, the main task is to construct a worst-case function to answer the above question. For instance, \cite{drori2014performance,teboulle2023elementary} utilized the Huber loss to establish the tightness of the $\cO(1/K)$ rate for constant stepsize gradient descent. The intuition is that the small gradient magnitude of the Huber loss forces a constant stepsize gradient descent to take conservatively small steps, thereby justifying the tightness. However, this strategy fails due to the adaptivity of {\polyak}, as demonstrated below.
\begin{proposition}\label{prop:Huber loss}
    Let $K \ge 1$ be the total iteration count. Consider the Huber loss defined by
    \begin{align*}
        \mathcal{H}_K(x) \coloneqq
            \begin{cases}
            \frac{1}{2K+1}\|x\| - \frac{1}{2(2K+1)^2}, & \|x\| > \frac{1}{2K+1},\\
            \tfrac{1}{2}\|x\|^2, & \|x\| \le \frac{1}{2K+1}.
            \end{cases}
    \end{align*}
    Then, for any initial point $x^0 \in \reals^n$, $1$-\texttt{PolyakGD} converges to $x^\star = 0$ in two steps.
\end{proposition}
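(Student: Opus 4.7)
The plan is a case analysis on $x^0$ relative to the transition radius $1/(2K+1)$. By the rotational symmetry of $\mathcal{H}_K$ about the origin, every Polyak iterate stays on the ray from $0$ through $x^0$, so the analysis reduces to tracking the scalar radius $r_k = \|x^k\|$. I would compute $x^1$ and $x^2$ explicitly in the linear region $\{\|x\| > 1/(2K+1)\}$ and the quadratic region $\{\|x\| \le 1/(2K+1)\}$, then combine the two to verify that exactly two Polyak steps suffice.

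In the linear region, $\nabla \mathcal{H}_K(x^0) = (2K+1)^{-1}\, x^0/\|x^0\|$ and $\mathcal{H}_K(x^0) = \|x^0\|/(2K+1) - 1/[2(2K+1)^2]$, so with $f^\star = 0$ the Polyak ratio is $\alpha_0 = (2K+1)\|x^0\| - 1/2$, and substitution into the $\gamma = 1$ update simplifies to $x^1 = x^0/[2(2K+1)\|x^0\|]$, whose norm is $1/[2(2K+1)]$; hence $x^1$ has entered the quadratic region in one step. In the quadratic region, $\nabla \mathcal{H}_K(x) = x$ and $\mathcal{H}_K(x) = \tfrac{1}{2}\|x\|^2$, so the Polyak ratio equals $1/2$ identically. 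Combining the two regimes, the argument would conclude that from any $x^0$ the first Polyak step deposits the iterate inside the quadratic region (or leaves it there if it started there), and the second Polyak step, exploiting the exact-line-search structure of Polyak on a quadratic piece, drives it to $x^\star = 0$, giving $x^2 = 0$. If $x^0$ already lies in the quadratic region, the same analysis starting at $k=0$ applies.

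The main obstacle is justifying the exact termination $x^2 = 0$. With $\gamma = 1$, a naive application of the Polyak ratio $1/2$ in the quadratic region only halves the iterate; a direct calculation yields $r_2 = 1/[4(2K+1)]$ rather than $0$. Reconciling this with the literal statement therefore requires either (i) a cancellation obtained by applying the Polyak formula directly to the piecewise $\mathcal{H}_K$ (carefully tracking which branch of the Huber loss governs $\mathcal{H}_K(x^1)$ and $\nabla \mathcal{H}_K(x^1)$ so that the second coefficient consumes the residual radius $1/[2(2K+1)]$ exactly), or (ii) the implicit use of the doubled Polyak scaling (i.e., the $\gamma = 2$ convention for exact line search on a quadratic piece), which sends $x^1$ to the origin in one step. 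Pinning down the correct mechanism behind this exact cancellation is the principal technical step; once it is identified, the remainder is mechanical scalar arithmetic, and the take-home message stands: $\mathcal{H}_K$ fails as a worst-case instance for $1$-\plgd because the adaptive Polyak stepsize escapes the small-gradient trap of the linear region within a single step and then annihilates the residual quadratic piece.
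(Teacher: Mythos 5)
Your first step reproduces the paper's computation exactly: in the linear region the Polyak ratio is $\alpha_0 = (2K+1)\|x^0\| - \tfrac12$, the update collapses to $x^1 = x^0/(2(2K+1)\|x^0\|)$, hence $\|x^1\| = \tfrac{1}{2(2K+1)}$ and the iterate enters the quadratic region after a single step. The obstacle you flag for the second step is resolved in the paper by a citation rather than a computation: since $\mathcal{H}_K$ restricted to $\{\|x\|\le \tfrac{1}{2K+1}\}$ is a quadratic with condition number $\kappa=1$, the paper invokes Theorem~1 of \cite{hazan2019revisiting} (the $\mathcal{O}((1-\tfrac{1}{\kappa})^K)$ rate) to conclude that $1$-\texttt{PolyakGD} reaches $x^\star$ in one further step.

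Your direct calculation, however, is the correct one, and it shows that this citation does not deliver the claimed conclusion. On $\tfrac12\|x\|^2$ the Polyak ratio is identically $\tfrac12$, so with $\gamma=1$ the update gives $x^2 = \tfrac12 x^1 \neq 0$: the squared distance contracts by the factor $\tfrac14$ per step (consistent with the $1-\tfrac{1}{4\kappa}$ contraction that the standard Polyak analysis actually yields, not $1-\tfrac1\kappa$), so there is no finite-step termination. Of your two proposed rescues, (i) fails because once $\|x^1\|\le\tfrac{1}{2K+1}$ both $\mathcal{H}_K(x^1)$ and $\nabla\mathcal{H}_K(x^1)$ are governed by the quadratic branch, leaving no cross-branch cancellation; and (ii) is precisely the mechanism that would give one-step annihilation, but it corresponds to $2$-\texttt{PolyakGD}, not the $\gamma=1$ algorithm in the statement. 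So the gap you identified is genuine, and it sits in the proposition and its proof rather than in your reasoning. What does hold, and what suffices for the proposition's role in the paper (disqualifying the Huber loss as a worst-case instance), is that after one step the iterate is in the quadratic region and thereafter contracts geometrically with ratio $\tfrac12$, giving $\mathcal{H}_K(x^k)\le 4^{-(k-1)}\cdot\tfrac{1}{8(2K+1)^2}$ for all $k\ge1$, far below any $\Omega(1/K)$ barrier; exact two-step convergence would require $\gamma=2$.
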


Therefore, our strategy is to construct a function such that the Polyak stepsize reduces to a constant stepsize along the trajectory. We remark that the first two theorems (Theorems \ref{thm:lb-strongcvx} and \ref{thm:smooth-cvx-lb}) allow for a wider range of $\gamma \in (0,4)$, which facilitates the analysis for the worst-case function under {\hd} smoothness (Theorem \ref{thm:Holder smooth-cvx-lb}).

\subsection{Worst-case quadratic functions for {\polyak}}
We start with the case where the objective function $f$ is $\mu$-strongly convex ($\mu > 0$) and $L$-smooth.

\begin{theorem} \label{thm:lb-strongcvx} 
Given the parameter $\gamma \in (0, 4)$, let $\kappa > \max\{4/ \gamma - 1, \gamma / (4-\gamma)\}$ be the condition number. Consider the following two-dimensional quadratic optimization problem 
  \begin{align*}
      \min_{x \in \reals^2} \ q (x) \coloneqq
  \frac{\kappa}{2} x_{(1)}^2 + \frac{1}{2} x_{(2)}^2.
  \end{align*} Suppose the initial point $x^0 = (x^0_{(1)}, x^0_{(2)})$ satisfies
  \begin{equation}\label{eq:initial point of lower bound function}
  (x_{(2)}^0)^2 = \frac{4 \kappa^2 - \gamma \kappa (\kappa + 1)}{\gamma ( \kappa + 1) - 4}  (x_{(1)}^0)^2.
 \end{equation}
Then {\polyak} generates a trajectory of $\{ x^k \}$ such that
  \[ \Bigg| \frac{x_{(1)}^{k + 1}}{x_{(1)}^k} \Bigg| = \left| \frac{x_{(2)}^{k + 1}}{x_{(2)}^k}
     \right| = \frac{\kappa - 1}{\kappa + 1}, \quad \text{and} \quad\ q(x^{k}) - q^\star = \Big(\frac{\kappa - 1}{\kappa + 1} \Big)^{2k} (q(x^0) - q^\star). \]
\end{theorem}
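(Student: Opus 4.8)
The plan is a direct coordinatewise computation exploiting the separable structure of $q$. Writing $\nabla q(x^k) = (\kappa x_{(1)}^k,\, x_{(2)}^k)$ and $q^\star = 0$, the Polyak stepsize is $\alpha_k = \big(\tfrac{\kappa}{2}(x_{(1)}^k)^2 + \tfrac12(x_{(2)}^k)^2\big)\big/\big(\kappa^2 (x_{(1)}^k)^2 + (x_{(2)}^k)^2\big)$, and the {\polyak} update decouples as $x_{(1)}^{k+1} = (1 - \gamma\kappa\alpha_k)\,x_{(1)}^k$ and $x_{(2)}^{k+1} = (1-\gamma\alpha_k)\,x_{(2)}^k$. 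The key object is the squared ratio $t_k \coloneqq (x_{(2)}^k)^2/(x_{(1)}^k)^2$, which is well defined because \eqref{eq:initial point of lower bound function} forces $x_{(1)}^0 \neq 0$ (the problem being trivial otherwise) and the recursion keeps $x_{(1)}^k \neq 0$. Dividing numerator and denominator of $\alpha_k$ by $(x_{(1)}^k)^2$ shows that the scaled stepsize depends on $x^k$ only through $t_k$, namely $\gamma\alpha_k = \gamma(\kappa + t_k)\big/\big(2(\kappa^2 + t_k)\big)$.

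First I would check that the prescribed value $t \coloneqq \big(4\kappa^2 - \gamma\kappa(\kappa+1)\big)\big/\big(\gamma(\kappa+1) - 4\big)$ from \eqref{eq:initial point of lower bound function} is a legitimate, strictly positive choice of $t_0$: the numerator factors as $\kappa\big((4-\gamma)\kappa - \gamma\big)$, which is positive iff $\kappa > \gamma/(4-\gamma)$ (using $\gamma < 4$), while the denominator $\gamma\kappa + \gamma - 4$ is positive iff $\kappa > 4/\gamma - 1$; both hold precisely under the stated lower bound on $\kappa$. A one-line rearrangement then shows that this $t$ is exactly the solution of $\gamma\alpha(t) = \tfrac{2}{\kappa+1}$, i.e.\ of $\gamma(\kappa+1)(\kappa + t) = 4(\kappa^2 + t)$.

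With $\gamma\alpha_k = \tfrac{2}{\kappa+1}$ in hand, the coordinate factors become $1 - \gamma\kappa\alpha_k = -\tfrac{\kappa-1}{\kappa+1}$ and $1 - \gamma\alpha_k = \tfrac{\kappa-1}{\kappa+1}$, which immediately gives $\big|x_{(1)}^{k+1}/x_{(1)}^k\big| = \big|x_{(2)}^{k+1}/x_{(2)}^k\big| = \tfrac{\kappa-1}{\kappa+1}$; since both coordinates shrink by the same factor in absolute value, $t_{k+1} = t_k$. Starting from $t_0 = t$, induction yields $t_k = t$ and hence $\gamma\alpha_k = \tfrac{2}{\kappa+1}$ for every $k$, so both ratio identities hold along the entire trajectory. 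Squaring the identities gives $(x_{(i)}^k)^2 = \big(\tfrac{\kappa-1}{\kappa+1}\big)^{2k}(x_{(i)}^0)^2$ for $i=1,2$, whence $q(x^k) - q^\star = \tfrac{\kappa}{2}(x_{(1)}^k)^2 + \tfrac12(x_{(2)}^k)^2 = \big(\tfrac{\kappa-1}{\kappa+1}\big)^{2k}\big(q(x^0) - q^\star\big)$, as claimed. I do not anticipate a genuine obstacle: the only delicate point is the simultaneous bookkeeping that the prescribed $t$ is both positive and a fixed point of the induced stepsize dynamics — which is exactly what the two-sided bound $\kappa > \max\{4/\gamma - 1,\ \gamma/(4-\gamma)\}$ secures — together with noting that $\gamma \in (0,4)$ keeps $4-\gamma > 0$, so the sign analysis of the numerator and denominator is the one claimed (and incidentally forces $\kappa > 1$, consistent with strong convexity).
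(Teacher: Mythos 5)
Your proof is correct and follows essentially the same inductive argument as the paper: both show that the prescribed coordinate ratio makes the Polyak stepsize collapse to the constant $\gamma\alpha_k = 2/(\kappa+1)$, whence both coordinates shrink in absolute value by $(\kappa-1)/(\kappa+1)$, preserving the ratio for the next step. Your reformulation through the scalar $t_k$ (together with the explicit sign check that $t>0$ under the stated bound on $\kappa$) is a mild repackaging of the paper's invariant $(x_{(2)}^k)^2 = \frac{4\kappa^2 - \gamma\kappa(\kappa+1)}{\gamma(\kappa+1)-4}(x_{(1)}^k)^2$, not a genuinely different route.
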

\begin{proof}
    First, note that the initial point \eqref{eq:initial point of lower bound function} is well-defined provided that $\kappa > \max\{4/ \gamma - 1, \gamma / (4-\gamma)\}$ for any $\gamma \in (0,4)$. We now prove by mathematical induction that the trajectory $\{x^k\}$ generated by {\polyak} satisfies the following relationship:
    \begin{equation}\label{eq:ratio of x1 and x2}
        (x_{(2)}^k)^2 = \frac{4 \kappa^2 - \gamma \kappa (\kappa + 1)}{\gamma ( \kappa + 1) - 4}  (x_{(1)}^k)^2, \ \text{ for all } k \ge 0.
    \end{equation}
    
\textsf{Base case.} The basic case $k = 0$ holds trivially due to the condition \eqref{eq:initial point of lower bound function}. 

\textsf{Inductive step.} Assuming the case holds for $k$, we now consider the case $k + 1$. Since the optimal function value $q^\star = 0$, the Polyak stepsize at iteration $k$ is given by
    \begin{align*}
       \alpha_k = \frac{q(x^k)}{\| \nabla q(x^k) \|^2} = \frac{\frac{\kappa}{2} (x_{(1)}^k)^2 + \frac{1}{2} (x_{(2)}^k)^2}{\kappa^2 (x_{(1)}^k)^2 + (x_{(2)}^k)^2} = \frac{\frac{\kappa}{2} (x_{(1)}^k)^2 + \frac{1}{2} \frac{4 \kappa^2 - \gamma \kappa (\kappa + 1)}{\gamma ( \kappa + 1) - 4}(x_{(1)}^k)^2}{\kappa^2 (x_{(1)}^k)^2 + \frac{4 \kappa^2 - \gamma \kappa (\kappa + 1)}{\gamma ( \kappa + 1) - 4}(x_{(1)}^k)^2} 
       = \frac{1}{\gamma }\frac{2}{\kappa
       + 1}.
    \end{align*}
    Consequently, the next iterate $x^{k+1}$ is
    \begin{align*}
        x^{k+1}_{(1)} &= x^k_{(1)} - \gamma \cdot \frac{1}{\gamma}\frac{2}{\kappa + 1} \kappa x_{(1)}^k = - \frac{\kappa - 1}{\kappa +
      1} x^k_{(1)}, \\
      x^{k+1}_{(2)} &= x^k_{(2)} - \gamma \cdot \frac{1}{\gamma} \frac{2}{\kappa + 1} x^k_{(2)} = \frac{\kappa - 1}{\kappa +
      1} x^k_{(2)},
    \end{align*}
    which implies that
    \begin{equation}\label{eq:ratio between two iterations}
       \left| \frac{x_{(1)}^{k+1}}{x_{(1)}^k} \right| = \left| \frac{x_{(2)}^{k+1}}{x_{(2)}^k}
     \right| = \frac{\kappa - 1}{\kappa + 1}.
    \end{equation}
The iterate $x^{k+1}$ thus satisfies the relationship \eqref{eq:ratio of x1 and x2}, which follows from the inductive hypothesis at iteration $k$. Therefore, by the principle of mathematical induction, the relationship \eqref{eq:ratio between two iterations} also holds for each iteration $k$, and the iterates $\{x^k\}$ converge to $x^\star$ along a fixed trajectory (Figure \ref{fig:trajectory}).

\begin{figure*}
\centering
\includegraphics[scale=0.45]{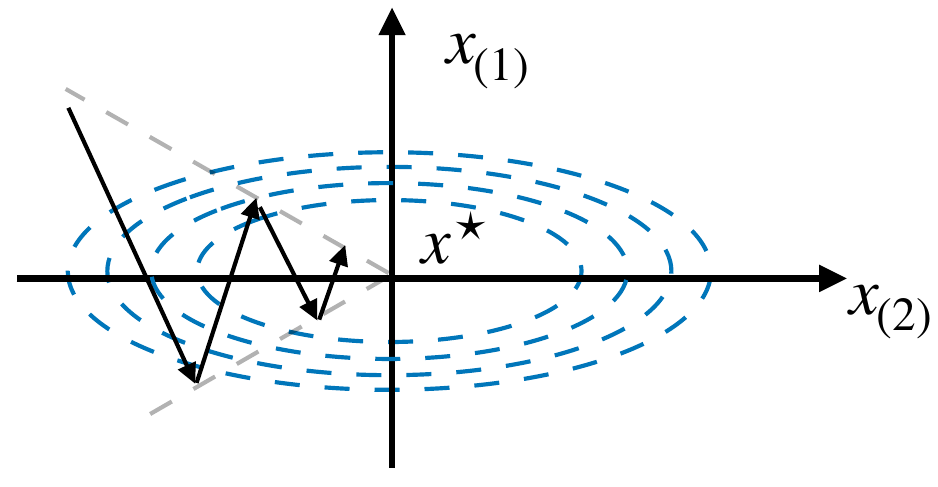}
\caption{Behavior of {\polyak} on the worst-case function \label{fig:trajectory}}
\end{figure*}

Finally, by calculating the function value at iteration $k+1$ with condition \eqref{eq:ratio between two iterations}, we arrive at the following recurrence relation:
    \begin{align*}
        q(x^{k}) = \frac{\kappa}{2} (x_{(1)}^{k+1})^2 + \frac{1}{2} (x_{(2)}^{k+1})^2
       = \left(\frac{\kappa - 1}{\kappa + 1}\right)^2\left(\frac{\kappa}{2}
       (x_{(1)}^k)^2 + \frac{1}{2} (x_{(2)}^k)^2\right) = \left(\frac{\kappa - 1}{\kappa + 1}\right)^2 q(x^{k-1}).
    \end{align*}
    This completes the proof by substituting $q^\star = 0$.
\end{proof}


We now turn to constructing the worst-case function for the class of $L$-smooth convex functions. Our strategy is to adapt the two-dimensional quadratic function $q $ in Theorem \ref{thm:lb-strongcvx} via a strongly-convex to convex reduction. Specifically, by setting the condition number $\kappa$ proportional to $K$, the linear convergence (in terms of $\kappa$) translates to sublinear convergence (in terms of $K$). However, this approach makes the gradient Lipschitz constant of $q $ dependent on $K$, implying that $q $ is not $L$-smooth. Crucially, we observe that although the Polyak stepsize itself scales inversely with the objective function, the resulting trajectory of {\polyak} remains invariant to such scaling. Define a scaled quadratic function
\begin{equation}\label{eq:scaled lower bound function}
    \Tilde{q}(x) \coloneqq \frac{L}{\kappa} q(x),
\end{equation}
where $L > 0$ is a constant. Then $\Tilde{q} $ is $L$-smooth by definition, and thus $\Tilde{q} $ is an $L$-smooth convex function. These two components lead to the following theorem.
   
\begin{theorem} \label{thm:smooth-cvx-lb}
  Given the parameter $\gamma \in (0, 4)$, let $K \geq 1$ be the total iteration count and $L > 0$ be a constant. Then there exists an $L$-smooth convex function $\Tilde{q} $ and an initial point $x^0$ such that the trajectory generated by {\polyak} satisfies
  \begin{align*}
      \min_{1 \le k \le K} \ \frac{\Tilde{q}(x^{k}) - \Tilde{q}^\star}{L\|x^0 - x^{\star}\|^2} \geq \frac{\gamma}{2\ee^{2\gamma}((4-\gamma)K+\gamma)},
  \end{align*}
  where $\ee$ denotes Euler's number.
\end{theorem}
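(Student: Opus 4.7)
Following the strategy laid out right before the statement, I would work with the rescaled quadratic $\Tilde{q}(x) = (L/\kappa)\,q(x)$ from \eqref{eq:scaled lower bound function}, where $q$ is the worst-case function of Theorem~\ref{thm:lb-strongcvx}, and calibrate $\kappa$ as a function of $\gamma$ and $K$ only at the very end. Since the Hessian of $\Tilde{q}$ is $\diag(L, L/\kappa)$, the function is convex and $L$-smooth for every $\kappa \ge 1$. The first observation I would record is the scale invariance of {\polyak}: with $\Tilde{\alpha}_k$ the Polyak stepsize for $\Tilde{q}$, a one-line calculation gives $\Tilde{\alpha}_k \nabla \Tilde{q}(x^k) = \alpha_k \nabla q(x^k)$, so running {\polyak} on $\Tilde{q}$ from a starting point $x^0$ satisfying \eqref{eq:initial point of lower bound function} reproduces exactly the trajectory analyzed in Theorem~\ref{thm:lb-strongcvx}.

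I would next extract the dependence on $\kappa$ and $K$ in closed form. Substituting \eqref{eq:initial point of lower bound function} into $q(x^0) = \tfrac{\kappa}{2}(x^0_{(1)})^2 + \tfrac{1}{2}(x^0_{(2)})^2$ and into $\|x^0 - x^\star\|^2 = (x^0_{(1)})^2 + (x^0_{(2)})^2$ makes both proportional to $(x^0_{(1)})^2$; after simplification the common factor $\gamma(\kappa+1)-4$ cancels and yields
\[
\frac{q(x^0)}{\|x^0 - x^\star\|^2} = \frac{2\kappa}{(4-\gamma)(\kappa+1)}.
\]
Since $\Tilde{q}(x^k) - \Tilde{q}^\star = (L/\kappa)\bigl((\kappa-1)/(\kappa+1)\bigr)^{2k} q(x^0)$ is strictly decreasing in $k$, the minimum over $1 \le k \le K$ is attained at $k = K$, and combining these two identities gives
\[
\frac{\Tilde{q}(x^K) - \Tilde{q}^\star}{L\,\|x^0 - x^\star\|^2} = \frac{2}{(4-\gamma)(\kappa+1)}\left(\frac{\kappa-1}{\kappa+1}\right)^{2K}.
\]

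The only genuinely design-dependent step is the choice of $\kappa$, which I would take to be
\[
\kappa + 1 = \frac{4\bigl((4-\gamma)K + \gamma\bigr)}{\gamma(4-\gamma)}.
\]
Direct substitution turns the prefactor $2/((4-\gamma)(\kappa+1))$ into exactly $\gamma/(2((4-\gamma)K + \gamma))$. I would then verify that this $\kappa$ satisfies $\kappa > \max\{4/\gamma - 1,\, \gamma/(4-\gamma)\}$ (needed to invoke Theorem~\ref{thm:lb-strongcvx}) and that $\kappa + 1 \ge 4$; both are short algebraic checks, the latter reducing to $(\gamma-2)^2 \ge 0$ when $K = 1$. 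With $t = 2/(\kappa+1) \in [0, 1/2]$, I would invoke the elementary inequality $1 - t \ge e^{-2t}$ (valid on $[0, 1/2]$ because $t \mapsto \log(1-t) + 2t$ vanishes at $0$ and has derivative $2 - 1/(1-t) \ge 0$ there) to obtain $((\kappa-1)/(\kappa+1))^{2K} \ge e^{-8K/(\kappa+1)}$. For the chosen $\kappa$ the exponent simplifies to $-2\gamma(4-\gamma)K/((4-\gamma)K + \gamma) \ge -2\gamma$, so the decay factor is at least $e^{-2\gamma}$, and multiplying the two lower bounds produces the claimed $\gamma/(2 e^{2\gamma}((4-\gamma)K + \gamma))$.

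In my view the only subtle step is the calibration of $\kappa$: the asymptotically optimal choice $\kappa \sim 4K/\gamma$ already produces the exponential factor $e^{-2\gamma}$ but only a $1/((4-\gamma)K)$ denominator, and the additive shift $4/(\gamma(4-\gamma))$ above is precisely what upgrades this to the tighter $(4-\gamma)K + \gamma$ while keeping all hypotheses of Theorem~\ref{thm:lb-strongcvx} valid uniformly in $K \ge 1$. Everything else is routine simplification.
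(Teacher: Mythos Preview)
Your proof is correct and follows essentially the same approach as the paper: the same rescaled quadratic, the same reliance on Theorem~\ref{thm:lb-strongcvx} via scale invariance, and the same choice of $\kappa$ (your $\kappa+1 = 4((4-\gamma)K+\gamma)/(\gamma(4-\gamma))$ is algebraically identical to the paper's $\kappa = 4K/\gamma + \gamma/(4-\gamma)$). The only minor difference is that you bound the decay factor $((\kappa-1)/(\kappa+1))^{2K}$ via the self-contained inequality $1-t \ge e^{-2t}$ on $[0,1/2]$, whereas the paper outsources this step to a technical lemma (Lemma~\ref{lemma:lb e-gamma}) whose proof is deferred to an external link; your treatment is cleaner here, but the logical structure is the same.
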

\begin{proof}
    We begin with the same setting as in Theorem \ref{thm:lb-strongcvx}. Suppose the condition number $\kappa$ satisfies $\kappa > \max\{4/ \gamma - 1, \gamma / (4-\gamma)\}$, a requirement that will be justified by our subsequent choice of $\kappa$. Without loss of generality, we choose the initial point $x^0$ on the unit sphere, from which it holds that:
    \begin{align*}
        1 = (x_{(1)}^0)^2 + (x_{(2)}^0)^2 = (x_{(1)}^0)^2 + \frac{4 - \gamma ( \kappa + 1) }{\gamma \kappa (\kappa + 1) - 4 \kappa^2 }  (x_{(1)}^0)^2.
    \end{align*}
    Then the initial point $x^0$ follows as
    \begin{align}\label{eq:initial point in the convex lower bound function}
       (x_{(1)}^0)^2 & ={} \frac{\gamma (\kappa + 1) - 4}{(4 - \gamma)(\kappa^2 - 1) } = -\frac{1}{\kappa^2 - 1} + \frac{\gamma \kappa}{(4 - \gamma)(\kappa^2 - 1)}, \\
       \ (x_{(2)}^0)^2 & ={} 1 - (x_{(1)}^0)^2 = \frac{\kappa^2}{\kappa^2 - 1} - \frac{\gamma \kappa}{(4 - \gamma)(\kappa^2 - 1)}, 
    \end{align}
    giving
    \begin{align*}
       q(x^0) = \frac{\kappa}{2}(x_{(1)}^0)^2 + \frac{1}{2}(x_{(2)}^0)^2  = \frac{2\kappa}{(4-\gamma)(\kappa + 1)}. 
    \end{align*}
    Using the results in Theorem \ref{thm:lb-strongcvx} yields
    \begin{align*}
       q(x^{K}) - q^\star = \left(\frac{\kappa - 1}{\kappa + 1}\right)^{2 K} q(x^0) = \left(\frac{\kappa - 1}{\kappa + 1}\right)^{2 K}\frac{2\kappa}{(4-\gamma)(\kappa + 1)}.  
    \end{align*}
    Consequently, plugging in the definition of $\tilde{q}  $, we have
    \begin{align*}
       \Tilde{q}(x^{K}) - \Tilde{q}^\star = \left(\frac{\kappa - 1}{\kappa + 1}\right)^{2 K} \Tilde{q}(x^0) = \left(\frac{\kappa - 1}{\kappa + 1}\right)^{2 K} \frac{L}{\kappa}q(x^0) = \left(\frac{\kappa - 1}{\kappa + 1}\right)^{2 K}\frac{2L}{(4-\gamma)(\kappa + 1)}.  
    \end{align*}
    The above equality implies that the function value gap is monotone in this case, and it follows that
    \begin{align*}
       \Tilde{q}(x^{K}) - \Tilde{q}^\star = \min_{1 \le k \le K} \ \Tilde{q}(x^{k}) - \Tilde{q}^\star = \left(\frac{\kappa - 1}{\kappa + 1}\right)^{2 K}\frac{2L}{(4-\gamma)(\kappa + 1)}.  
    \end{align*}
    Now we choose $\kappa = 4 K / \gamma + \gamma/(4-\gamma) > \max\{4/ \gamma - 1, \gamma / (4-\gamma)\}$. Note that $x^\star$ is the origin and $\|x^0\| = 1$. We have
    \begin{align*}
        \min_{1 \le k \le K} \ \frac{\Tilde{q}(x^{k}) - \Tilde{q}^\star}{L\|x^0 - x^\star\|^2} = \left(\frac{8K-2(1 + K)\gamma + \gamma^2}{2K(4-\gamma) + 2\gamma}\right)^{2K}\frac{\gamma}{2((4-\gamma)K+\gamma)}.
    \end{align*}
    Therefore, the proof is completed by using technical lemma \ref{lemma:lb e-gamma}.
\end{proof}

\begin{remark}
    Our reduction argument is also applicable to the BB stepsize on quadratic functions due to its scale invariance. By leveraging the tight convergence result and the worst-case quadratic function from \cite{li2021faster}, a similar $\Omega(1/K)$ result can be established for the BB stepsize.
\end{remark}

We extend the above analysis to the {\hd} smooth setting in the following theorem. The proof is deferred to the appendix. Given $\nu \in (0,1]$, we construct a worst-case function $q_{\nu}(x) \coloneqq(\Tilde{q}(x))^{\frac{\nu+1}{2}}$, which can be shown to be $(\nu,L_\nu)$-{\hd} smooth for some constant $L_\nu > 0$. 
\begin{theorem} \label{thm:Holder smooth-cvx-lb}
  Given the parameter $\gamma \in (0, 2)$ and $\nu \in (0,1]$, let $K \geq 1$ be the total iteration count and $L_{\nu} > 0$ be a constant. Then there exists a $(\nu,L_\nu)$-{\hd} smooth convex function $q_\nu $ and an initial point $x^0$ such that the trajectory generated by {\polyak} satisfies
  \begin{align*}
       \min_{1 \le k \le K} \ \frac{q_\nu(x^k) - q_{\nu}(x^\star)}{L_\nu\|x^0 - x^\star\|^{\nu + 1}} \ge \frac{2^{\nu - 1}\gamma^{\frac{\nu + 1}{2}}}{\ee^{2\gamma} (\nu + 1)((2(\nu + 1) - \gamma)K + \gamma)^{\frac{\nu + 1}{2}}},
    \end{align*} 
  where $\ee$ denotes Euler's number.
\end{theorem}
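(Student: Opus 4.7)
The plan is to mimic the strongly-convex-to-convex reduction of Theorem \ref{thm:smooth-cvx-lb}, but now composing with a power to produce a {\hd}-smooth worst case. Concretely, take $\tilde q$ and the unit-sphere initial point $x^0$ exactly as in Theorem \ref{thm:smooth-cvx-lb}, write $\tilde q(x) = \tfrac{1}{2}\|x\|_A^2$ with $A = \diag(L, L/\kappa)$, and define
\[
q_\nu(x) \assign \tilde q(x)^{(\nu+1)/2} = 2^{-(\nu+1)/2}\|x\|_A^{\nu+1}.
\]
Convexity is immediate since this is a norm raised to a power at least one, and $q_\nu^\star = 0$ at the origin. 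The condition number $\kappa$ will be chosen at the end exactly as in Theorem \ref{thm:smooth-cvx-lb}, but with an effective scaling parameter $\gamma' \ne \gamma$.

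The key structural observation is that the Polyak stepsize is invariant under monotone power composition. Using $\nabla q_\nu = \tfrac{\nu+1}{2}\,\tilde q^{(\nu-1)/2}\,\nabla \tilde q$, a short computation of $\alpha_k^{q_\nu} = q_\nu(x^k)/\|\nabla q_\nu(x^k)\|^2$ and substitution into the $\gamma$-\plgd\ update on $q_\nu$ yields
\[
x^{k+1} = x^k - \gamma\,\alpha_k^{q_\nu}\,\nabla q_\nu(x^k) = x^k - \tfrac{2\gamma}{\nu+1}\,\alpha_k^{\tilde q}\,\nabla \tilde q(x^k).
\]
Hence the $\gamma$-\plgd\ trajectory on $q_\nu$ from $x^0$ coincides with the $\gamma'$-\plgd\ trajectory on $\tilde q$ from the same $x^0$, with $\gamma' \assign 2\gamma/(\nu+1)$. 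For $\gamma \in (0,2)$ and $\nu \in (0,1]$ one checks $\gamma' \in (0,4)$ --- which is precisely why the theorem's range on $\gamma$ is tighter here --- so Theorem \ref{thm:smooth-cvx-lb} applies verbatim with $\gamma$ replaced by $\gamma'$.

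I would then verify $(\nu, L_\nu)$-{\hd} smoothness of $q_\nu$ with a sharp constant. The cleanest route is the change of variable $y = A^{1/2} x$, which reduces the problem to the radial function $\phi(y) = 2^{-(\nu+1)/2}\|y\|^{\nu+1}$ in the Euclidean norm; invoking the classical inequality $\|\,\|y\|^{\nu-1}y - \|y'\|^{\nu-1}y'\,\| \le 2^{1-\nu}\|y-y'\|^\nu$ for $\nu \in (0,1]$ and pulling the bound back through $\|A^{1/2}\|_{\mathrm{op}} = \sqrt{L}$ on both the output and the input should give $L_\nu = (\nu+1)\,2^{-(3\nu-1)/2}\,L^{(\nu+1)/2}$. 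This is where most of the bookkeeping sits, and I expect it to be the main technical obstacle: the target constant $2^{\nu-1}/(\nu+1)$ in the theorem pins down $L_\nu$ up to an exact numerical factor, so sharpness of the norm inequality matters. A reassuring consistency check is that at $\nu = 1$ the formula collapses to $L_\nu = L$, recovering Theorem \ref{thm:smooth-cvx-lb}.

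With these pieces in place the rest is arithmetic. Applying Theorem \ref{thm:smooth-cvx-lb} to $\tilde q$ with scaling $\gamma'$ gives $\min_{1\le k\le K} (\tilde q(x^k)-\tilde q^\star)/(L\|x^0-x^\star\|^2) \ge \gamma'/(2\ee^{2\gamma'}((4-\gamma')K+\gamma'))$. Raising both sides to the power $(\nu+1)/2$ preserves the inequality and the identity of the minimizing iterate (since $t\mapsto t^{(\nu+1)/2}$ is monotone on $[0,\infty)$); substituting $\gamma' = 2\gamma/(\nu+1)$ collapses $\ee^{2\gamma'\cdot(\nu+1)/2}$ to $\ee^{2\gamma}$ and $(4-\gamma')K+\gamma'$ to $\tfrac{2}{\nu+1}\bigl((2(\nu+1)-\gamma)K+\gamma\bigr)$; and dividing through by the $L_\nu$ from the previous paragraph delivers the claimed bound with leading factor $2^{\nu-1}/(\nu+1)$.
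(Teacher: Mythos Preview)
Your proposal is correct and follows essentially the same route as the paper: define $q_\nu = \tilde q^{(\nu+1)/2}$, observe that $\gamma$-\plgd\ on $q_\nu$ coincides with $\tfrac{2\gamma}{\nu+1}$-\plgd\ on $\tilde q$, invoke Theorem~\ref{thm:smooth-cvx-lb} at the effective scaling $\gamma'=\tfrac{2\gamma}{\nu+1}\in(0,4)$, and compute $L_\nu = (\nu+1)\,2^{(1-3\nu)/2}\,L^{(\nu+1)/2}$ via the {\hd} smoothness of $\|A^{1/2}x\|^{\nu+1}$. The paper packages the {\hd}-smoothness computation into a separate lemma (citing a result of Rodomanov--Nesterov for $\|x\|^{\nu+1}$), but your direct argument via the inequality $\bigl\|\,\|y\|^{\nu-1}y-\|y'\|^{\nu-1}y'\,\bigr\|\le 2^{1-\nu}\|y-y'\|^\nu$ and the change of variable $y=A^{1/2}x$ is equivalent.
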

\begin{remark}
    Setting $\nu \rightarrow  0$, the above result reduces to the lower bound $\Omega(1/\sqrt{K})$. We remark that this result does not contradict the $\Omega(1/\sqrt[4]{K})$ bound established in \cite{zamani2024exact}. Our tightness result is established for the best iterate, while theirs applies to the last iterate, which is generally a more challenging criterion. Consequently, the worst-case function constructed in \cite{zamani2024exact} is more complicated. This distinction reveals the significant gap between the best iterate and the last iterate under bounded gradient condition, highlighting the necessity of analyzing both cases.
\end{remark}
Finally, we conclude this subsection by addressing the tightness of the gradient norm convergence. Since our worst-case function $\Tilde{q} $ is quadratic, it naturally allows us to establish this result as a byproduct.

\begin{theorem} \label{thm:smooth-cvx-lb gradient norm}
  Given the parameter $\gamma \in (0, 2)$ and $\nu \in (0,1]$, let $K \geq 1$ be the total iteration count and $L_{\nu} > 0$ be a constant. Then there exists a $(\nu,L_\nu)$-{\hd} smooth convex function $q_\nu $ and an initial point $x^0$ such that the trajectory generated by {\polyak} satisfies
  \begin{align*}
        \min_{1 \le k \le K} \ \frac{\|\nabla q_\nu(x^k)\|}{L_\nu \|x^0 - x^\star\|^{\frac{\nu + 1}{2}}} \ge \frac{2^{\frac{\nu-1}{2}}\gamma^{\frac{\nu+1}{2}}
(\nu+1)^{\frac{\nu-1}{2}}\sqrt{4-\gamma}}
{\ee^{\gamma\nu}\sqrt{4(4-\gamma)+\gamma^2}(2(\nu+1)-\gamma)^{\frac{\nu}{2}}K^{\frac{\nu}{2}}},
    \end{align*}
  where $\ee$ denotes Euler's number.
\end{theorem}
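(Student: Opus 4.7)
My plan is to reuse the worst-case construction from Theorem \ref{thm:Holder smooth-cvx-lb}, taking $q_\nu(x) = (\tilde q(x))^{(\nu+1)/2}$ with the same initial point $x^0$ on the unit sphere. The $(\nu, L_\nu)$-\hd{} smoothness is already established there, and a short chain-rule calculation shows that \polyak($\gamma$) on $q_\nu$ generates exactly the same iterates as \polyak($2\gamma/(\nu+1)$) on the underlying quadratic $\tilde q$, because the scalar factor $\tfrac{\nu+1}{2}(\tilde q)^{(\nu-1)/2}$ cancels between numerator and denominator in the Polyak stepsize. Consequently Theorem \ref{thm:lb-strongcvx} applies verbatim to the trajectory: each coordinate of $x^k$ contracts in absolute value by $\tfrac{\kappa-1}{\kappa+1}$ per step.

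The central observation is that, unlike the function value analyzed in Theorem \ref{thm:Holder smooth-cvx-lb}, the gradient norm has a particularly clean form along this trajectory. By the chain rule,
\[
\|\nabla q_\nu(x^k)\| \;=\; \tfrac{\nu+1}{2}\,(\tilde q(x^k))^{(\nu-1)/2}\,\|\nabla \tilde q(x^k)\|.
\]
Two independent geometric-decay facts feed into this expression. First, since $\tilde q$ is quadratic and the iterate coordinates contract uniformly, $\|\nabla \tilde q(x^k)\| = \bigl(\tfrac{\kappa-1}{\kappa+1}\bigr)^{k}\|\nabla \tilde q(x^0)\|$. Second, Theorem \ref{thm:lb-strongcvx} already yields $\tilde q(x^k) = \bigl(\tfrac{\kappa-1}{\kappa+1}\bigr)^{2k}\tilde q(x^0)$. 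Multiplying, $\|\nabla q_\nu(x^k)\|$ equals a $k$-independent constant times $\bigl(\tfrac{\kappa-1}{\kappa+1}\bigr)^{k\nu}$, which is strictly decreasing in $k$ for $\nu > 0$, so $\min_{1 \le k \le K} \|\nabla q_\nu(x^k)\| = \|\nabla q_\nu(x^K)\|$.

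It then remains to collect constants. Using the closed-form initial point from the proof of Theorem \ref{thm:smooth-cvx-lb}, a direct substitution gives $\tilde q(x^0) = \tfrac{2L}{(4-\hat\gamma)(\kappa+1)}$ and, after the same kind of simplification that appears in Theorem \ref{thm:lb-strongcvx}, $\|\nabla \tilde q(x^0)\|^2 = \tfrac{L^2\hat\gamma}{\kappa(4-\hat\gamma)}$, where $\hat\gamma \coloneqq 2\gamma/(\nu+1)$. Plugging these into the chain-rule expression at $k=K$, choosing the same $\kappa \propto K$ used to prove Theorem \ref{thm:Holder smooth-cvx-lb}, and invoking Lemma \ref{lemma:lb e-gamma} to bound $\bigl(\tfrac{\kappa-1}{\kappa+1}\bigr)^{K\nu}$ below by an $\ee^{-\gamma\nu}$-type exponential collapses the contraction factor and leaves the remaining powers of $\kappa$, $\kappa+1$, and $4-\hat\gamma$ to produce the advertised $K^{-\nu/2}$ rate.

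I expect the main obstacle to be pure bookkeeping rather than anything conceptual: one must track precisely how $L_\nu$ inherits its dependence on $L$ through the composition $q_\nu = \tilde q^{(\nu+1)/2}$, since this dependence is exactly what produces the specific $\sqrt{4-\gamma}$ and $\sqrt{4(4-\gamma)+\gamma^2}$ factors in the final bound. The $(\nu-1)/2$ powers of $\tilde q(x^0)$ also introduce fractional exponents in $L$ and $\kappa+1$ that must be normalized against $L_\nu \|x^0 - x^\star\|^{(\nu+1)/2}$; once this accounting is done, every remaining manipulation is routine algebra reusing identities already established in Theorems \ref{thm:lb-strongcvx}--\ref{thm:Holder smooth-cvx-lb}.
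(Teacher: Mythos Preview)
Your proposal is correct and follows essentially the same route as the paper's proof: construct $q_\nu=\tilde q^{(\nu+1)/2}$, reduce \polyak($\gamma$) on $q_\nu$ to \polyak($2\gamma/(\nu+1)$) on $\tilde q$, combine the chain rule with the exact geometric decay of $\|\nabla\tilde q(x^k)\|$ and $\tilde q(x^k)$ from Theorem~\ref{thm:lb-strongcvx}, evaluate at $k=K$ by monotonicity, and finish with Lemma~\ref{lemma:lb e-gamma}. The only discrepancy is the specific $\kappa$: the paper takes $\kappa=4K/\gamma+\gamma/(4-\gamma)$ in the \emph{original} $\gamma$ (not $\hat\gamma$ as in Theorem~\ref{thm:Holder smooth-cvx-lb}), which is what produces the particular factors $\sqrt{4-\gamma}$ and $\sqrt{4(4-\gamma)+\gamma^2}$ in the stated bound; with your $\kappa$ you would obtain a valid $\Omega(K^{-\nu/2})$ bound with slightly different constants.
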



\begin{remark}
    Establishing the tight gradient norm convergence of constant stepsize gradient descent involves subtle distinctions. Setting $\nu = 1$, the above result reduces to the lower bound $\Omega(1/\sqrt{K})$. However, \cite{nesterov2018lectures,diakonikolas2021potential,teboulle2023elementary} established that small constant stepsizes achieve an upper bound of $\cO(1/K)$, a result shown to be tight by \cite{kim2018generalizing} via PEP. Using a slightly large constant stepsize slows the convergence to $\cO(1/\sqrt{K})$, as shown in Proposition 4.4 in \cite{rotaru2022tight} and Theorem 2.1 in \cite{rotaru2024exact}, a bound that is also tight according to PEP. Therefore, our result suggests that the Polyak stepsize behaves like a ``large constant stepsize" in some sense.
\end{remark}



\subsection{Polyak stepsize escapes the worst case under floating-point arithmetic}
Our tightness convergence results show that {\plgd} reduces to gradient descent with a constant stepsize on the worst-case quadratic function. While this conclusion holds under exact arithmetic, it may not persist in practical implementations due to floating-point errors. Indeed, when implementing the Polyak stepsize on the constructed worst-case quadratic function, the numerical behavior differs from the theoretical results. Figure~\ref{fig:instability} plots the stepsize sequence $\{\alpha_k\}$ and the corresponding function value gaps $\{q(x^k) - q^{\star} \}$ generated by $1$-\texttt{PolyakGD}. As predicted by our theory, $1$-\texttt{PolyakGD} initially behaves like constant stepsize gradient descent. Yet after a few iterations, it departs from the theoretical trajectory, producing larger stepsizes and converging significantly faster. This suggests that under inexact arithmetic, {\polyak} (with $\gamma \in (0,2)$) can automatically exploit floating-point errors to escape the worst-case behavior, which may explain the superior practical performance of the Polyak stepsize. This section formally quantifies this intuition. The techniques developed in this section have also been applied to analyze the hypergradient descent heuristic recently \citep{chu2025gradient}.

\begin{figure}[h]
\centering\includegraphics[width=0.5\linewidth]{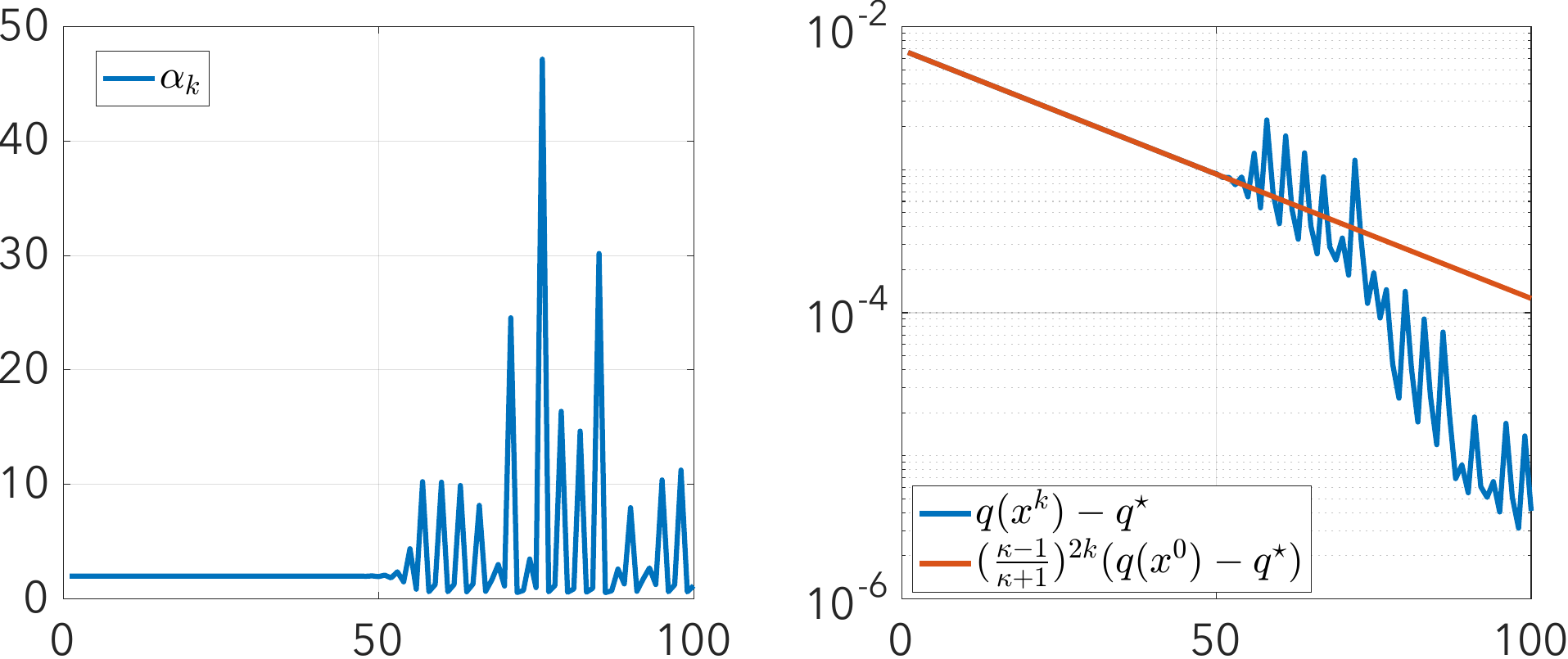}
    \caption{Instability in the presence of floating-point error allows {\plgd} to escape the worst-case}
    \label{fig:instability}
\end{figure}

\paragraph{Dynamical system formulation of {\polyak}.} To simplify notation, for this subsection only, we will overload the notation by $z \coloneqq (x, y) = (x_{(1)}, x_{(2)})$. In this new notation, the worst-case function is $q (z) \assign \tfrac{\kappa}{2} x^2 + y^2$. Since $q $ is homogeneous with optimum $z^\star = (0, 0)$, it suffices to consider $\frac{z}{\|z\|}$ to analyze the algorithm trajectory. Our analysis starts by rewriting {\polyak} as a nonlinear dynamical system: define the state variable $s = ( \tfrac{x}{\| z \|}, \tfrac{y}{\| z \|},
\alpha)$. Letting $z^+ = z - \alpha \nabla f(z)$, we have 
\begin{equation} \label{eqn:dynamic}
	s^+ = \left(\begin{array}{c}
     \tfrac{x^+}{\| z^+ \|}\\
     \tfrac{y^+}{\| z^+ \|}\\
     \alpha^+
   \end{array}\right) =\mathcal{F}_{\gamma} (s) \assign \left(\begin{array}{c}
     \tfrac{(1 - \alpha \kappa) x}{\sqrt{(1 - \alpha \kappa)^2 x^2 + (1 -
     \alpha)^2 y^2}}\\
     \tfrac{(1 - \alpha) y}{\sqrt{(1 - \alpha \kappa)^2 x^2 + (1 - \alpha)^2
     y^2}}\\
     \tfrac{\gamma}{2} \tfrac{\kappa x^2 + y^2}{(1 - \alpha \kappa)^2 \kappa^2
     x^2 + (1 - \alpha)^2 y^2}
   \end{array}\right) .
\end{equation}
 
According to Theorem~\ref{thm:lb-strongcvx}, when the initial point of {\polyak} satisfies
Equation~\eqref{eq:initial point of lower bound function}, the algorithmic trajectory
oscillates between two rays. In the language of dynamical systems, there exist two
states $s_1, s_2$ such that
\[
    s_1 = \mathcal{F}_{\gamma}(s_2), \qquad 
    s_2 = \mathcal{F}_{\gamma}(s_1)
        = \mathcal{F}_{\gamma}(\mathcal{F}_{\gamma}(s_2)),
\]
and $s_1$ (or equivalently $s_2$) is a period-2 orbit of $\mathcal{F}_{\gamma}$
\citep{alligood1997chaos}.  
To understand the local behavior of {\polyak} in the presence of floating-point
errors, it is natural to examine the stability of this nonlinear map around its
orbit. This reduces to analyzing the spectrum of the Jacobian of $\mathcal{F}_{\gamma} $. The proof of Proposition \ref{prop:dynamical-system} is assisted by \texttt{Mathematica} and the source code is available at \href{https://github.com/Gwzwpxz/polyak_proof}{this link}.

\begin{proposition} \label{prop:dynamical-system}
  Suppose $\gamma \in (0, 2]$ and $\kappa > \max\{4 \gamma^{- 1} - 1, 3 + 2 \sqrt{2}$\}. Then the nonlinear dynamical system \eqref{eqn:dynamic}
  has period-two orbits given by
  \[ \mathcal{S} = \left\{ \footnotesize (s_1, s_2) : s_1 = \left(\begin{smallmatrix}
       x\\
       y\\
       \tfrac{2}{\kappa + 1}
     \end{smallmatrix}\right), s_2 = \left(\begin{smallmatrix}
       -x\\
       y\\
       \tfrac{2}{\kappa + 1}
     \end{smallmatrix}\right), x^2 = \tfrac{\gamma (\kappa + 1) - 4}{(\gamma - 4)
     (\kappa^2 - 1)}, y^2 = \tfrac{\kappa (\gamma - 4 \kappa + \gamma
     \kappa)}{(\gamma - 4) (\kappa^2 - 1)} \right\} . \]
  Moreover, the Jacobian of the dynamical system is given by
  \[ \mathcal{J}_{\mathcal{F}_{\gamma}} (s) = \small{\left(\begin{array}{ccc}
       - \frac{(\alpha - 1)^2 y^2 (\alpha \kappa - 1)}{((x - \alpha \kappa
       x)^2 + (y - \alpha y)^2)^{3 / 2}} & \frac{(\alpha - 1)^2 x y (\alpha
       \kappa - 1)}{((x - \alpha \kappa x)^2 + (y - \alpha y)^2)^{3 / 2}} &
       \frac{(\alpha - 1) (\kappa - 1) x y^2}{((x - \alpha \kappa x)^2 + (y -
       \alpha y)^2)^{3 / 2}}\\
       \frac{(\alpha - 1) x y (\alpha \kappa - 1)^2}{((x - \alpha \kappa x)^2
       + (y - \alpha y)^2)^{3 / 2}} & - \frac{(\alpha - 1) x^2 (\alpha \kappa
       - 1)^2}{((x - \alpha \kappa x)^2 + (y - \alpha y)^2)^{3 / 2}} & -
       \frac{(\kappa - 1) x^2 y (\alpha \kappa - 1)}{((x - \alpha \kappa x)^2
       + (y - \alpha y)^2)^{3 / 2}}\\
       - \frac{(\alpha - 1)^2 \gamma (\kappa - 1) \kappa x y^2 (\alpha \kappa
       - 1)^2}{(\kappa^2 x^2 (\alpha \kappa - 1)^2 + (\alpha - 1)^2 y^2)^2} &
       \frac{(\alpha - 1)^2 \gamma (\kappa - 1) \kappa x^2 y (\alpha \kappa -
       1)^2}{(\kappa^2 x^2 (\alpha \kappa - 1)^2 + (\alpha - 1)^2 y^2)^2} &
       \frac{(\alpha - 1) \gamma (\kappa - 1)^2 \kappa x^2 y^2 (\alpha \kappa
       - 1)}{(\kappa^2 x^2 (\alpha \kappa - 1)^2 + (\alpha - 1)^2 y^2)^2}
     \end{array}\right)} . \]
In particular, for any $(s_1, s_2) \in \mathcal{S}$, the spectral radius of the product Jacobians satisfy
  \[ \rho (\mathcal{J}_{\mathcal{F}_1} (s_1) \mathcal{J}_{\mathcal{F}_1}
     (s_2)) = \frac{4( \kappa^2 - 4 \kappa + 1)^2} {(\kappa - 1)^4} > 1 \quad \text{and} \quad \rho (\mathcal{J}_{\mathcal{F}_2} (s_1) \mathcal{J}_{\mathcal{F}_2}
     (s_2)) = 1. \]
\end{proposition}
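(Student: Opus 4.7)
The plan has three parts: identify the period-two orbit, derive the Jacobian, then evaluate the spectral radius of the product Jacobian on the orbit.

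For the first part, I would leverage Theorem \ref{thm:lb-strongcvx}, which already constructs a trajectory of {\polyak} that oscillates on two fixed rays whenever the initial point satisfies the ratio condition \eqref{eq:initial point of lower bound function}. Along that trajectory the Polyak stepsize is constant, $\alpha_k = \tfrac{2}{\gamma(\kappa+1)}$, so the effective stepsize $\gamma\alpha_k$ carried in the dynamical state equals $\tfrac{2}{\kappa+1}$. Because $x_{(1)}$ flips sign each step while $x_{(2)}$ keeps its sign, normalizing onto the unit sphere yields two consecutive states that agree in the second coordinate and flip in the first, giving exactly the claimed $s_1, s_2$ with $\alpha = \tfrac{2}{\kappa+1}$. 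Imposing $x^2+y^2=1$ together with the ratio \eqref{eq:initial point of lower bound function} solves for $x^2$ and $y^2$ in closed form; the hypothesis $\kappa > \max\{4\gamma^{-1}-1, \gamma/(4-\gamma)\}$ is what keeps both expressions positive, and the stronger bound $\kappa > 3 + 2\sqrt{2}$ is reserved for the spectral inequality in the third part.

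For the second part, each entry of $\mathcal{J}_{\mathcal{F}_\gamma}$ is obtained by straightforward application of the quotient and chain rules to the three components of $\mathcal{F}_\gamma$ in \eqref{eqn:dynamic}. The result is a matrix of rational functions with the stated denominator degrees; the derivation is mechanical but unwieldy, which is why the authors offload it to \texttt{Mathematica}.

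For the third part, a key structural simplification is that $\mathcal{F}_\gamma$ is $0$-homogeneous in $(x,y)$: scaling $(x,y)\mapsto(tx,ty)$ leaves all three outputs unchanged, so $(x,y,0)$ lies in the kernel of $\mathcal{J}_{\mathcal{F}_\gamma}(s)$ at every state and the image is at most two-dimensional, forcing one trivial zero eigenvalue in the $3\times 3$ product. Substituting the orbit values from Part 1 into the Jacobian at $s_1$ and $s_2$, forming the product, and computing the characteristic polynomial should leave at most two nonzero roots; the orbit symmetry $s_2=(-x,y,\alpha)$ together with the explicit values $\alpha=\tfrac{2}{\kappa+1}$ and the formulas for $x^2,y^2$ then produces the single surviving eigenvalue that governs $\rho$. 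For $\gamma=1$ this eigenvalue simplifies to $\tfrac{4(\kappa^2-4\kappa+1)^2}{(\kappa-1)^4}$, and the strict inequality $\rho>1$ is equivalent to $\kappa^2-6\kappa+1>0$, i.e., $\kappa > 3 + 2\sqrt{2}$, which is exactly the stated threshold. For $\gamma=2$ the same calculation collapses to $\rho=1$ precisely, reflecting the marginal-stability degeneracy at the boundary of the admissible range.

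The main obstacle is symbolic rather than conceptual: even after exploiting homogeneity and the $(x,y)\mapsto(-x,y)$ orbit symmetry, the Jacobian entries are rational expressions whose product does not admit a clean hand simplification, and one must track which eigenvalue of the product matrix is the radial-kernel root and which is the dynamically meaningful one. I would therefore delegate Parts 2 and 3 to the computer algebra system referenced by the authors, and use the structural observations above only as sanity checks on the output.
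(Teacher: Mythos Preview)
Your proposal is correct and mirrors the paper's approach exactly: the paper offers no written proof of this proposition and instead delegates the entire computation---Jacobian, orbit substitution, and spectral radius---to a \texttt{Mathematica} notebook linked in the text. Your additional structural observations (the $0$-homogeneity in $(x,y)$ forcing a zero eigenvalue of the product Jacobian, and the reduction of $\rho>1$ at $\gamma=1$ to $\kappa^2-6\kappa+1>0$, i.e.\ $\kappa>3+2\sqrt{2}$) are correct and serve as useful sanity checks that the paper itself does not spell out.
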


Proposition~\ref{prop:dynamical-system} shows that when $\gamma = 1$ and $\kappa > 6$, the spectral radius of the product Jacobian is strictly greater than $1$. Hence, the nonlinear dynamical system~\eqref{eqn:dynamic} is unstable around the trajectory induced by the worst-case function. Consequently, once floating-point errors enter the iterations, {\polyak} deviates from this trajectory and exhibits accelerated convergence. Similar observation holds for all $\gamma \in (0, 2)$. In contrast, when $\gamma = 2$, the spectral radius is exactly $1$, implying that $2$-\texttt{PolyakGD} may become trapped on the worst-case trajectory. It is consistent with the zig-zag behavior observed in \cite{huang2024analytic}. In summary, the worst-case trajectory is only realized under exact arithmetic, and \texttt{PolyakGD} in practice is largely immune to it.
\section{Universal Convergence of {{\polyak}} Across Function Classes}\label{section:universality}
In this section, we demonstrate that {\polyak} is universal across various function classes, following the terminology ``universal" introduced in \cite{nesterov2015universal}. We first establish convergence under {\hd} smoothness and growth conditions, and then provide several extensions, such as relaxations of convexity and the stochastic Polyak stepsize.

\subsection{Convergence results under {\hd} smoothness and growth}

Before presenting the main convergence result, we justify the choice of the set $\cK$ for the {\hd} growth condition in \eqref{eq:Holder growth}. Given the parameter $\gamma \in (0, 2]$, first note that for any iteration $k \ge 0$ and any $x^\star \in \cX^\star$, the distance to $x^\star$ satisfies
    \begin{equation}\label{eq:bound optimality distance}
        \begin{aligned}
           \|x^{k+1} - x^\star\|^2 = \ &\|x^{k} - \gamma \cdot \alpha_k \nabla f(x^k)- x^\star\|^2 \\
        = \ &\|x^{k} - x^\star\|^2 - 2\gamma \cdot \alpha_k \langle \nabla f(x^k), x^{k} - x^\star \rangle + \gamma^2 \cdot \alpha_k^2 \|\nabla f(x^k)\|^2 \\
        \le \ &\|x^{k} - x^\star\|^2 - 2\gamma \cdot \alpha_k (f(x^k) - f^\star) + \gamma^2 \cdot \alpha_k^2 \|\nabla f(x^k)\|^2  \\
        = \ &\|x^{k} - x^\star\|^2 - \gamma(2-\gamma)\frac{(f(x^k) - f^\star)^2}{\|\nabla f(x^k)\|^2}. 
        \end{aligned}
    \end{equation}
This inequality implies that the iterate sequence generated by {\polyak} is Fej\'{e}r monotone. Accordingly, we define
\begin{equation}\label{eq:selection of cK}
    \cK(x^\star) \coloneqq\mathbb{B}(x^\star, \|x^0 - x^\star\|), \ \text{and} \ \cK \coloneqq\bigcup_{x^\star \in \cX^\star} \cK(x^\star),
\end{equation}
where $\mathbb{B}(x^\star, \|x^0 - x^\star\|)$ denotes the closed ball centered at $x^\star$ with radius $\|x^0 - x^\star\|$. 
Clearly, we have $\cX^\star \subseteq \bigcup_{x^\star \in \cX^\star} \cK(x^\star)$ and $\sup_{x \in \cK} \dist(x, \cX^\star) < \infty$, satisfying the requirement discussed in Section~\ref{section:preliminary}.
\begin{theorem}\label{thm:Holder smooth growth}
   Let $\gamma \in (0,2)$ and $x^0 \in \reals^n$. Suppose that the objective function $f$ satisfies the $(\nu, L_\nu)$-{\hd} smoothness condition (Definition \ref{def:smooth}) and the $(r, \rho_r)$-{\hd} growth condition (Definition \ref{def:growth}) on $\cK$ defined in \eqref{eq:selection of cK}. Then, for any (even) iteration count $K \ge 1$, the iterates generated by {\polyak} satisfy:
   
\begin{itemize}[leftmargin=12pt]
\item When $r = \nu + 1$,
\begin{align*}
       \min_{1 \le k \le K} \ f(x^k) - f^\star \le \Bigg(1 - \frac{\gamma(2-\gamma) \nu^{\frac{2\nu}{\nu + 1}}\rho_r^{\frac{2}{\nu + 1}}}{(\nu + 1)^{\frac{2\nu}{\nu + 1}}L_\nu^{\frac{2}{\nu + 1}}}\Bigg)^{\frac{K}{2}} \sup_{x \in {\cK}} \ \|\nabla f(x)\| \cdot \dist(x^0, \mathcal{X}^\star),
    \end{align*}
\item When $r > \nu + 1$,  
\begin{align*}
       \min_{1 \le k \le K} \ f(x^k) - f^\star \le \frac{(r - \nu - 1)^{\frac{(\nu + 1)^2}{2(\nu+1-r)}} (\gamma(2-\gamma))^{\frac{r(\nu + 1)}{2(\nu+1-r)}} \nu^{\frac{\nu r}{\nu+1-r}}}{2^{\frac{r(\nu + 1)}{2(\nu+1-r)}} (\nu + 1)^{\frac{(\nu+1)^2 + 2\nu r}{2(\nu+1-r)}}} \cdot \frac{L_\nu^{\frac{r}{r - \nu - 1}}}{\rho_r^{\frac{\nu + 1}{r - \nu - 1}} K^{\frac{r(\nu + 1)}{2(r - \nu - 1)}}}.
    \end{align*}
\end{itemize}
\end{theorem}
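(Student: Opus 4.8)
The plan is to collapse the analysis into a single scalar recursion for the squared distance to the solution set. Write $\delta_k \assign f(x^k) - f^\star$ and $d_k \assign \dist(x^k, \cX^\star)$, and assume $\delta_k > 0$ for all $k = 0,\dots,K$ (otherwise the left-hand side vanishes). The Fej\'er inequality \eqref{eq:bound optimality distance} holds for \emph{every} $x^\star \in \cX^\star$, so $\|x^{k+1}-x^\star\| \le \|x^k - x^\star\|$ for all such $x^\star$; hence every iterate remains in $\cK$ as defined in \eqref{eq:selection of cK}, and both Definitions \ref{def:smooth} and \ref{def:growth} apply along the trajectory. Evaluating \eqref{eq:bound optimality distance} at $x^\star = \mathrm{proj}_{\cX^\star}(x^k)$ and inserting the gradient bound $\|\nabla f(x^k)\|^2 \le C_\nu\,\delta_k^{2\nu/(\nu+1)}$ (the second line of \eqref{eq:Holder smooth inequalities} raised to the power $2\nu/(\nu+1)$), where $C_\nu \assign (\tfrac{\nu+1}{\nu})^{2\nu/(\nu+1)}L_\nu^{2/(\nu+1)}$, yields
\begin{equation}\label{eq:sketch-interm}
    d_{k+1}^2 \le d_k^2 - \tfrac{\gamma(2-\gamma)}{C_\nu}\,\delta_k^{2/(\nu+1)}.
\end{equation}
Finally the growth condition $\delta_k \ge \rho_r d_k^r$ converts \eqref{eq:sketch-interm} into the closed recursion $d_{k+1}^2 \le d_k^2 - c\,(d_k^2)^{r/(\nu+1)}$ with $c \assign \gamma(2-\gamma)\rho_r^{2/(\nu+1)}/C_\nu$, and \eqref{eq:relation of nu and rho} guarantees $c\,(d_k^2)^{r/(\nu+1)-1} < 1$, so every step is well defined.

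For $r = \nu+1$ the recursion is a contraction, $d_{k+1}^2 \le (1-c)\,d_k^2$, so $d_K^2 \le (1-c)^K\dist(x^0,\cX^\star)^2$; a direct computation shows that $c$ equals the bracketed constant in the statement. Convexity then finishes this case: $\min_{1\le k\le K}\delta_k \le \delta_K \le \|\nabla f(x^K)\|\,d_K \le \big(\sup_{x\in\cK}\|\nabla f(x)\|\big)(1-c)^{K/2}\dist(x^0,\cX^\star)$.

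For $r > \nu+1$, put $p \assign r/(\nu+1) > 1$. The main obstacle is that simply bounding $\delta_K$ through smoothness, $\delta_K \le \tfrac{L_\nu}{\nu+1}d_K^{\nu+1}$, produces only the exponent $\tfrac{(\nu+1)^2}{2(r-\nu-1)}$ rather than the optimal $\tfrac{r(\nu+1)}{2(r-\nu-1)}$; the remedy is a two-phase argument, which is why $K$ is taken even. Phase one: from $d_{k+1}^2 \le d_k^2\big(1 - c(d_k^2)^{p-1}\big)$ and Bernoulli's inequality $(1-t)^{-(p-1)} \ge 1 + (p-1)t$, one gets $(d_{k+1}^2)^{-(p-1)} \ge (d_k^2)^{-(p-1)} + (p-1)c$, and telescoping over the first $K/2$ steps gives $d_{K/2}^2 \le \big((p-1)c\,K/2\big)^{-1/(p-1)}$, free of any dependence on $x^0$. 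Phase two: summing \eqref{eq:sketch-interm} over $k = K/2,\dots,K-1$ gives $\sum_{k=K/2}^{K-1}\delta_k^{2/(\nu+1)} \le \tfrac{C_\nu}{\gamma(2-\gamma)}d_{K/2}^2$, so averaging over the $K/2$ indices yields $\min_{1\le k\le K}\delta_k \le \big(\tfrac{2C_\nu}{\gamma(2-\gamma)K}\,d_{K/2}^2\big)^{(\nu+1)/2}$. Substituting the phase-one bound and the values of $c$, $C_\nu$, $p$ and simplifying the exponents — the $K$-exponent collapses to exactly $-\tfrac{r(\nu+1)}{2(r-\nu-1)}$, and the powers of $L_\nu$, $\rho_r$, $\gamma(2-\gamma)$, $r-\nu-1$ line up as claimed — gives the stated constant. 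The delicate points are the exponent bookkeeping that yields the clean $C_\nu$, the well-posedness check from \eqref{eq:relation of nu and rho}, and above all recognizing the necessity of the $K/2$ split; the residual algebra is routine.
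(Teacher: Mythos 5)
Your proposal is correct and follows essentially the same route as the paper's proof: Fej\'er monotonicity gives the one-step descent in squared distance, the second \hd-smoothness inequality bounds $\|\nabla f(x^k)\|^2$ to produce $d_{k+1}^2 \le d_k^2 - \tfrac{\gamma(2-\gamma)}{C_\nu}\delta_k^{2/(\nu+1)}$, the growth condition closes the recursion in $d_k^2$, and the $r>\nu+1$ case is handled by the same two-phase argument — bounding $d_{K/2}^2$ via the scalar recursion, then summing the per-step decrease over the second half and averaging — including the same observation that bounding $\delta_K$ directly through smoothness loses a factor. The only cosmetic departure is deriving the recursion bound $d_k^2 \le ((p-1)ck)^{-1/(p-1)}$ inline via Bernoulli's inequality $(1-t)^{-(p-1)}\ge 1+(p-1)t$ rather than invoking the paper's Lemma~\ref{lemma:recursive technical lemma} (proved via the mean value theorem), but the resulting telescoping is identical.
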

\begin{proof}
    Let $\D_k \coloneqq \dist(x^k, \cX^\star)$ and $\Delta_k \coloneqq f(x^k) - f^\star$ for short. From Equation \eqref{eq:bound optimality distance}, we obtain
    \begin{align*}
        \D_{k+1}^2 \le \|x^{k+1} - x^\star\|^2 \le \|x^k - x^\star\|^2 - \gamma(2-\gamma)\frac{\Delta_k^2}{\|\nabla f(x^k)\|^2}.
    \end{align*}
    Since the right-hand side in the above inequality holds for all $x^\star \in \cX^\star$, we derive
    \begin{equation}\label{eq:basic inequality}
        \D_{k+1}^2 \le \D_k^2 - \gamma(2-\gamma)\frac{\Delta_k^2}{\|\nabla f(x^k)\|^2}.
    \end{equation}
    Using the second inequality in Equation \eqref{eq:Holder smooth inequalities} yields
    \begin{align*}
       \D_{k+1}^2 \le \D_{k}^2 - \frac{\gamma(2-\gamma) \nu^{\frac{2\nu}{\nu + 1}}}{(\nu + 1)^{\frac{2\nu}{\nu + 1}}L_\nu^{\frac{2}{\nu + 1}}}\Delta_k^{\frac{2}{\nu + 1}}.  
    \end{align*}
    Now we substitute the {\hd} growth condition to get
    \begin{equation}\label{eq:Holder smooth growth inequality}
        \D_{k+1}^2 \le \D_{k}^2 - \frac{\gamma(2-\gamma) \nu^{\frac{2\nu}{\nu + 1}}\rho_r^{\frac{2}{\nu + 1}}}{(\nu + 1)^{\frac{2\nu}{\nu + 1}}L_\nu^{\frac{2}{\nu + 1}}}\D_k^{\frac{2r}{\nu + 1}}.
    \end{equation}
\paragraph{Case 1. $r = \nu + 1$.}  the above equation reduces to
    \begin{align*}
        \D_{k+1}^2 \le \D_{k}^2 - \frac{\gamma(2-\gamma) \nu^{\frac{2\nu}{\nu + 1}}\rho_r^{\frac{2}{\nu + 1}}}{(\nu + 1)^{\frac{2\nu}{\nu + 1}}L_\nu^{\frac{2}{\nu + 1}}}\D_k^{2}.
    \end{align*}
    Note that in this case, Equation \eqref{eq:relation of nu and rho} implies  $        \frac{\rho_r}{L_\nu} \le \frac{1}{\nu+1} \le 1$.
    Hence, the linear convergence follows
    \begin{align*}
        \D_{k+1}^2 \le \Bigg(1 - \frac{\gamma(2-\gamma) \nu^{\frac{2\nu}{\nu + 1}}\rho_r^{\frac{2}{\nu + 1}}}{(\nu + 1)^{\frac{2\nu}{\nu + 1}}L_\nu^{\frac{2}{\nu + 1}}}\Bigg)\D_k^{2},
    \end{align*}
    as these constants satisfy $\gamma(2-\gamma) < 1$, $\nu / (\nu + 1) < 1$ and $\rho_r / L_v \le 1$. By convexity, it holds that $\Delta_k \le \inner{\nabla f(x^k)}{x^k - x^\star} \le G \D_k$, $\text{ for all } k \ge 0$, where $G \coloneqq \sup_{x \in {\cK}} \ \|\nabla f(x)\|$. Therefore, we conclude
    \begin{align*}
        \min_{1 \le k \le K} \ f(x^k) - f^\star \le \Bigg(1 - \frac{\gamma(2-\gamma) \nu^{\frac{2\nu}{\nu + 1}}\rho_r^{\frac{2}{\nu + 1}}}{(\nu + 1)^{\frac{2\nu}{\nu + 1}}L_\nu^{\frac{2}{\nu + 1}}}\Bigg)^{\frac{K}{2}} G \D_0, \ \text{ for all } K \ge 1.
    \end{align*}
\paragraph{Case 2. $r > \nu + 1$.} From Equation \eqref{eq:Holder smooth growth inequality}, we use the technical lemma \ref{lemma:recursive technical lemma} in the Appendix (by setting $a_k = \D_k^2, \ \tau = \frac{r}{\nu + 1} > 1, \ c = \frac{\gamma(2-\gamma) \nu^{\frac{2\nu}{\nu + 1}}\rho_r^{\frac{2}{\nu + 1}}}{(\nu + 1)^{\frac{2\nu}{\nu + 1}}L_\nu^{\frac{2}{\nu + 1}}}$.)
    It immediately follows
    \begin{align*}
        \D_k^2 \le \ &\Bigg(\D_0^{\frac{2(\nu + 1 - r)}{\nu + 1}} + \frac{r - \nu - 1}{\nu + 1} \cdot \frac{\gamma(2-\gamma) \nu^{\frac{2\nu}{\nu + 1}}\rho_r^{\frac{2}{\nu + 1}}}{(\nu + 1)^{\frac{2\nu}{\nu + 1}}L_\nu^{\frac{2}{\nu + 1}}} k\Bigg)^{\frac{\nu + 1}{\nu + 1 - r}} \\
        \le \ &\Bigg(\frac{r - \nu - 1}{\nu + 1} \cdot \frac{\gamma(2-\gamma) \nu^{\frac{2\nu}{\nu + 1}}\rho_r^{\frac{2}{\nu + 1}}}{(\nu + 1)^{\frac{2\nu}{\nu + 1}}L_\nu^{\frac{2}{\nu + 1}}} k\Bigg)^{\frac{\nu + 1}{\nu + 1 - r}}.
    \end{align*}
    Now we combine Equation \eqref{eq:basic inequality} with the second inequality in Equation \eqref{eq:Holder smooth inequalities}, and it follows
    \begin{align*}
       \gamma(2-\gamma)\Delta_k^\frac{2}{\nu + 1} \le \frac{(\nu + 1)^{\frac{2\nu}{\nu + 1}}L_\nu^{\frac{2}{\nu + 1}}}{\nu^{\frac{2\nu}{\nu + 1}}}(\D_k^2 - \D_{k+1}^2).
    \end{align*}
    Without loss of generality, we choose $K$ as an even number such that $K \ge 2$. Then we get
    \begin{align*}
        \frac{1}{K/2}\sum_{k = K/2}^{K-1} \gamma(2-\gamma)\Delta_k^\frac{2}{\nu + 1} \le \ &\frac{2(\nu + 1)^{\frac{2\nu}{\nu + 1}}L_\nu^{\frac{2}{\nu + 1}}}{\nu^{\frac{2\nu}{\nu + 1}}K} \sum_{k = K/2}^{K-1} (\D_k^2 - \D_{k+1}^2) \\
        \le \ &\frac{2(\nu + 1)^{\frac{2\nu}{\nu + 1}}L_\nu^{\frac{2}{\nu + 1}}}{\nu^{\frac{2\nu}{\nu + 1}}K} \D_{K/2}^2 \\
        \le \ &\frac{2(\nu + 1)^{\frac{2\nu}{\nu + 1}}L_\nu^{\frac{2}{\nu + 1}}}{\nu^{\frac{2\nu}{\nu + 1}}K} \Bigg(\frac{r - \nu - 1}{\nu + 1} \cdot \frac{\gamma(2-\gamma) \nu^{\frac{2\nu}{\nu + 1}}\rho_r^{\frac{2}{\nu + 1}}}{2(\nu + 1)^{\frac{2\nu}{\nu + 1}}L_\nu^{\frac{2}{\nu + 1}}} K\Bigg)^{\frac{\nu + 1}{\nu + 1 - r}}.
    \end{align*}
    This implies
    \begin{align*}
         \min_{1 \le k \le K} \ (f(x^k) - f^\star)^{\frac{2}{\nu + 1}} \le \ &\frac{2(\nu + 1)^{\frac{2\nu}{\nu + 1}}L_\nu^{\frac{2}{\nu + 1}}}{\gamma(2-\gamma)\nu^{\frac{2\nu}{\nu + 1}}K} \Bigg(\frac{r - \nu - 1}{\nu + 1} \cdot \frac{\gamma(2-\gamma) \nu^{\frac{2\nu}{\nu + 1}}\rho_r^{\frac{2}{\nu + 1}}}{2(\nu + 1)^{\frac{2\nu}{\nu + 1}}L_\nu^{\frac{2}{\nu + 1}}} K\Bigg)^{\frac{\nu + 1}{\nu + 1 - r}} \\
        = \ &\frac{(r - \nu - 1)^{\frac{\nu+1}{\nu+1-r}} (\gamma(2-\gamma))^{\frac{r}{\nu+1-r}} \nu^{\frac{2\nu r}{(\nu+1)(\nu+1-r)}}}{2^{\frac{r}{\nu+1-r}} (\nu + 1)^{\frac{(\nu+1)^2 + 2\nu r}{(\nu+1)(\nu+1-r)}}} \cdot \frac{L_\nu^{\frac{2r}{(\nu+1)(r - \nu - 1)}}}{\rho_r^{\frac{2}{r - \nu - 1}} K^{\frac{r}{r - \nu - 1}}}.
    \end{align*}
    Therefore, we conclude
    \begin{align*}
       \min_{1 \le k \le K} \ f(x^k) - f^\star \le \frac{(r - \nu - 1)^{\frac{(\nu + 1)^2}{2(\nu+1-r)}} (\gamma(2-\gamma))^{\frac{r(\nu + 1)}{2(\nu+1-r)}} \nu^{\frac{\nu r}{\nu+1-r}}}{2^{\frac{r(\nu + 1)}{2(\nu+1-r)}} (\nu + 1)^{\frac{(\nu+1)^2 + 2\nu r}{2(\nu+1-r)}}} \cdot \frac{L_\nu^{\frac{r}{r - \nu - 1}}}{\rho_r^{\frac{\nu + 1}{r - \nu - 1}} K^{\frac{r(\nu + 1)}{2(r - \nu - 1)}}}.
    \end{align*}
    The proof is completed.
\end{proof}

\begin{remark}
    The above theorem demonstrates that {\polyak} adapts to both {\hd} smoothness and {\hd} growth conditions. Specifically, letting $r \rightarrow \infty$, the above result reduces to $\cO(1/K^{(\nu + 1) / 2})$\footnote{\cite{orabona2025new} established a more general result for the stochastic setting under {\hd} smoothness with arbitrary surrogate functions; see Theorem 7 therein.}, which matches the rate achieved by the universal gradient method in \cite{nesterov2015universal}. This result is shown to be tight in Theorem \ref{thm:Holder smooth-cvx-lb}. On the other hand, setting $\nu = 0$, the above result reduces to $\cO(1/K^{r / (2(r - 1))})$, which matches the lower bound \citep{nemirovskii1985optimal}. This rate is also achieved by restart-based optimization methods \citep{roulet2017sharpness,yang2018rsg,xu2019accelerating,d2021acceleration,renegar2022simple,grimmer2023general}.
\end{remark}

\subsection{Extensions}
In this subsection, we extend the previous analysis to more general settings.

\paragraph{Star-convexity.} The proof of Theorem \ref{thm:Holder smooth growth} relies on the convexity of $f$ to obtain\[\inner{\nabla f(x^k)}{x^k - x^\star} \ge f(x^k) - f^\star\] in Equation \eqref{eq:bound optimality distance}. This inequality corresponds to a weaker condition known as star-convexity (see Definition 4.1 in \cite{nesterov2018lectures}, for example). Therefore, the convexity assumption can naturally be relaxed to star-convexity. For example, when $f(x) \coloneqq h(g(x))$ is the composition between an outer scalar function $h: \reals \rightarrow \reals$ and a function $g: \reals^n \rightarrow \reals$, $f$ can be shown to be star-convex under mild conditions of $h$ and $g$.  
\begin{lemma}\label{lemma:star convexity of f}
    Suppose the scalar function $h$ is nondecreasing and convex, and the function $g$ is differentiable and (star-)convex, i.e., $\inner{\nabla g(x)}{x - x_g^\star} \ge g(x) - g(x_g^\star)$, where $x_g^\star \in \cX_g^\star \coloneqq\argmin_{x \in \reals^n} \ g(x)$. For any $x_g^\star \in \cX_g^\star$, the objective function $f$ satisfies $\inner{\xi(x)\nabla g(x)}{x - x_g^\star} \ge f(x) - f(x_g^\star)$, for all $x \in \reals^n$, and $\xi(x) \in \partial h(g(x))$.
\end{lemma}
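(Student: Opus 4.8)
The plan is to chain three elementary facts: the subgradient inequality for the convex outer function $h$, the nonnegativity of subgradients of a nondecreasing convex function, and the star-convexity of $g$. First I would apply convexity of $h$: since $\xi(x) \in \partial h(g(x))$, the subgradient inequality evaluated at the point $g(x_g^\star)$ gives $h(g(x_g^\star)) \ge h(g(x)) + \xi(x)\,(g(x_g^\star) - g(x))$, which rearranges to $f(x) - f(x_g^\star) \le \xi(x)\,\bigl(g(x) - g(x_g^\star)\bigr)$, using $f = h \circ g$.

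Next I would establish $\xi(x) \ge 0$. If some subgradient $\xi$ of $h$ at a point $t$ were negative, then for any $s < t$ the subgradient inequality $h(s) \ge h(t) + \xi(s - t)$ would force $h(s) > h(t)$, contradicting that $h$ is nondecreasing; hence every subgradient of $h$ is nonnegative, and in particular $\xi(x) \ge 0$.

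Then I would use that $x_g^\star$ is a global minimizer of $g$, so $g(x) - g(x_g^\star) \ge 0$, together with star-convexity $\inner{\nabla g(x)}{x - x_g^\star} \ge g(x) - g(x_g^\star)$. Multiplying this last inequality by the nonnegative scalar $\xi(x)$ gives $\xi(x)\,\bigl(g(x) - g(x_g^\star)\bigr) \le \inner{\xi(x)\nabla g(x)}{x - x_g^\star}$. Combining with the first display yields $f(x) - f(x_g^\star) \le \inner{\xi(x)\nabla g(x)}{x - x_g^\star}$ for all $x \in \reals^n$, which is the claim.

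The argument is routine; the only point requiring (minor) care is the sign of $\xi(x)$, since it is precisely the nonnegativity of the outer subgradient that lets us multiply through the star-convexity inequality without reversing it, and it is also what forces $g(x)-g(x_g^\star)\ge 0$ to be relevant. I do not anticipate any genuine obstacle, as the subdifferential $\partial h(g(x))$ is nonempty by finiteness and convexity of $h$, and the composition is differentiable in the directions that matter through $\nabla g(x)$ even though $f$ itself need not be differentiable.
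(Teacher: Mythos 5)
Your proof is correct and follows essentially the same chain of inequalities as the paper's: convexity of $h$ to bound $f(x) - f(x_g^\star)$ by $\xi(x)\bigl(g(x) - g(x_g^\star)\bigr)$, nonnegativity of $\xi(x)$ from monotonicity of $h$, and then star-convexity of $g$ to finish. The only difference is that you spell out the short contradiction argument for $\xi(x)\ge 0$, which the paper simply asserts; the remark that $g(x)-g(x_g^\star)\ge 0$ is true but not actually needed for the multiplication step.
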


This structure is common in machine learning optimization tasks \citep{chen2021solving,jiang2022optimal}, where the function $f$ itself may be nonconvex and nonsmooth. Star-convexity suffices to ensure that Theorem \ref{thm:Holder smooth growth} remains valid. The only modification is that we need to use the subgradient of $f$ in {\polyak} (Algorithm \ref{alg:polyakgd}).

\paragraph{Convergence of $2$-\texttt{PolyakGD}.} Note that Theorem \ref{thm:Holder smooth growth} is not applicable to the case $\gamma = 2$. To establish a convergence guarantee for this case, we introduce the following inequality. This inequality is an extension of inequality (2.1.10)\footnote{This inequality is also known as cocoercivity in the literature (see, e.g., \cite{malitsky2024adaptive,latafat2024adaptive}).} in \cite{nesterov2018lectures} to {\hd} smoothness. 
\begin{lemma}\label{lemma:cocoercivity Holder inequality}
  Suppose $f : \mathbb{R}^n \rightarrow \mathbb{R}$ is $(L_{\nu}, \nu)$-{\hd}
  smooth with $\nu \in (0,1]$ and $L_\nu > 0$, then it holds that
  \[ f (y) \geq f (x) + \langle \nabla f (x), y - x \rangle +
     \frac{\nu}{\nu + 1} \frac{1}{L_{\nu}^{\frac{1}{\nu}}} \| \nabla f (x) - \nabla f
     (y) \|^{\frac{\nu + 1}{\nu}}, \ \text{ for all } x, y \in \reals^n.\]
\end{lemma}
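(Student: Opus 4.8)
The plan is to mimic the classical proof of inequality (2.1.10) in \cite{nesterov2018lectures} (the $L$-smooth case $\nu = 1$), replacing the quadratic upper model by the H\"older upper model from the first inequality in \eqref{eq:Holder smooth inequalities}. Fix $x, y \in \reals^n$ and introduce the auxiliary ``Bregman-type'' function $\phi(z) \coloneqq f(z) - \inner{\nabla f(x)}{z}$. By convexity of $f$ (implied by $(\nu, L_\nu)$-H\"older smoothness together with the first-order model being a global lower bound), $\phi$ is convex, and $\nabla \phi(z) = \nabla f(z) - \nabla f(x)$, so $\phi$ is minimized at $z = x$ with $\phi(x) = f(x) - \inner{\nabla f(x)}{x}$. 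Moreover $\phi$ inherits $(\nu, L_\nu)$-H\"older smoothness since its gradient differs from $\nabla f$ by a constant.

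The core step is to lower-bound $\phi(y) - \phi(x)$ in terms of $\|\nabla \phi(y)\| = \|\nabla f(y) - \nabla f(x)\|$. Starting from $\phi(x) = \min_z \phi(z) \le \phi(z)$ for the specific point $z = y - t\,\nabla \phi(y)$ with $t > 0$ to be optimized, apply the H\"older upper bound \eqref{eq:Holder smooth inequalities} to $\phi$ at the pair $(y, z)$:
\begin{align*}
    \phi(x) \le \phi(y - t\nabla\phi(y)) \le \phi(y) - t\|\nabla\phi(y)\|^2 + \frac{L_\nu}{\nu+1} t^{\nu+1}\|\nabla\phi(y)\|^{\nu+1}.
\end{align*}
Rearranging gives $\phi(y) - \phi(x) \ge t\|\nabla\phi(y)\|^2 - \frac{L_\nu}{\nu+1}t^{\nu+1}\|\nabla\phi(y)\|^{\nu+1}$ for every $t>0$, and I would then maximize the right-hand side over $t$. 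Setting the derivative to zero yields $t^\star = \big(\|\nabla\phi(y)\|^{1-\nu}/L_\nu\big)^{1/\nu}$, and substituting back collapses the bound to $\phi(y) - \phi(x) \ge \frac{\nu}{\nu+1}L_\nu^{-1/\nu}\|\nabla\phi(y)\|^{(\nu+1)/\nu}$ after simplifying the exponents. Finally, unfolding the definition of $\phi$ turns $\phi(y) - \phi(x) = f(y) - f(x) - \inner{\nabla f(x)}{y-x}$, which is exactly the claimed inequality.

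The main obstacle is purely bookkeeping: tracking the fractional exponents through the optimization over $t$ and verifying that the coefficient comes out to exactly $\frac{\nu}{\nu+1}L_\nu^{-1/\nu}$, and that at $\nu = 1$ this reduces to the familiar $\frac{1}{2L}\|\nabla f(x) - \nabla f(y)\|^2$. A secondary point worth stating carefully is the justification that $\phi$ attains its minimum at $x$ — this uses only that $f$ (hence $\phi$) is convex and differentiable with $\nabla\phi(x)=0$, so no coercivity or attainment argument is needed beyond the first-order optimality condition. By symmetry in $x \leftrightarrow y$ one could also average the two resulting inequalities to recover the standard two-point cocoercivity form, but the one-sided statement as written follows directly from the single application above.
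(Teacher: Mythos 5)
Your proposal is correct and is essentially the same argument as the paper: both pass to the auxiliary function $\varphi(z) = f(z) - \langle \nabla f(x), z\rangle$, note that it is $(\nu,L_\nu)$-H\"older smooth and (by convexity of $f$) minimized at $x$, and then bound $\varphi(x)=\min_z\varphi(z)$ by evaluating the H\"older upper model of $\varphi$ at a gradient step from $y$ with an optimized stepsize, which is exactly the paper's $x^+$. One small correction to your parenthetical: H\"older smoothness does not by itself imply convexity---the paper assumes $f$ is convex as a standing hypothesis in Section~\ref{section:preliminary}---but this does not affect your proof, since you correctly invoke convexity when justifying the argmin step.
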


\begin{theorem}\label{thm:two times polyak}
      Suppose the objective function $f$ satisfies $(\nu, L_\nu)$-{\hd} smoothness. For any total iteration count $K \ge 1$, the iterates generated by {\polyak} with $\gamma = 2$ satisfy
    \begin{align*}
       \min_{1 \le k \le K} \ f(x^k) - f^\star \le \Big(\frac{\nu + 1}{4\nu}\Big)^\nu \frac{L_\nu \dist(x^0, \cX^\star)^{\nu+1}}{K^\nu}.
    \end{align*}
\end{theorem}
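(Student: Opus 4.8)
The plan is to run the same Fej\'er-monotonicity bookkeeping used in Theorem~\ref{thm:Holder smooth growth}, but at $\gamma=2$ the usual one-step decrease $\gamma(2-\gamma)(f(x^k)-f^\star)^2/\|\nabla f(x^k)\|^2$ vanishes, so I will instead extract progress from the cocoercivity-type estimate of Lemma~\ref{lemma:cocoercivity Holder inequality}. Write $\D_k \coloneqq \dist(x^k, \cX^\star)$ and $\Delta_k \coloneqq f(x^k)-f^\star$; since $f$ is convex and differentiable, $\nabla f(x^k)=0$ forces $x^k\in\cX^\star$ and $\Delta_k=\D_k=0$, so the claimed bound is trivial in that case and I may assume $\nabla f(x^k)\ne 0$ (hence $\D_k>0$) for all relevant $k$. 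Applying Lemma~\ref{lemma:cocoercivity Holder inequality} with $y=x^\star$ and using $\nabla f(x^\star)=0$ gives the sharpened subgradient inequality
\[
\inner{\nabla f(x^k)}{x^k-x^\star} \ \ge\ \Delta_k + \frac{\nu}{(\nu+1)L_\nu^{1/\nu}}\,\|\nabla f(x^k)\|^{\frac{\nu+1}{\nu}},
\]
which is the key gain over plain convexity.

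Next I would substitute $\gamma=2$ into the expansion of $\|x^{k+1}-x^\star\|^2$ in Equation~\eqref{eq:bound optimality distance}. The decisive cancellation is that $\gamma^2\alpha_k^2\|\nabla f(x^k)\|^2 = 4\alpha_k\Delta_k$ exactly offsets the $\Delta_k$ term produced by the cocoercivity lower bound, so after taking the infimum over $x^\star\in\cX^\star$ I am left with
\[
\D_{k+1}^2 \ \le\ \D_k^2 - \frac{4\nu}{(\nu+1)L_\nu^{1/\nu}}\,\Delta_k\,\|\nabla f(x^k)\|^{\frac{1-\nu}{\nu}}.
\]
For $\nu=1$ the gradient factor is $1$ and this is already the clean recursion. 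For $\nu\in(0,1)$ the exponent $(1-\nu)/\nu$ is positive, so the one slightly non-obvious point is that I need a \emph{lower} bound on $\|\nabla f(x^k)\|$ rather than the usual upper bound coming from H\"older smoothness; convexity supplies it, since $\Delta_k \le \inner{\nabla f(x^k)}{x^k-x^\star} \le \|\nabla f(x^k)\|\,\D_k$ (taking $x^\star$ the projection of $x^k$ onto $\cX^\star$), giving $\|\nabla f(x^k)\| \ge \Delta_k/\D_k$. Substituting, simplifying $\Delta_k(\Delta_k/\D_k)^{(1-\nu)/\nu} = \Delta_k^{1/\nu}\D_k^{-(1-\nu)/\nu}$, and then using $\D_k\le\D_0$ from Fej\'er monotonicity (which at $\gamma=2$ is immediate from \eqref{eq:bound optimality distance} since $\gamma(2-\gamma)=0$) yields the recursion $\D_{k+1}^2 \le \D_k^2 - c\,\Delta_k^{1/\nu}$ with $c = \dfrac{4\nu}{(\nu+1)L_\nu^{1/\nu}\D_0^{(1-\nu)/\nu}}$, valid for every $k$.

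Finally I would telescope over $k=1,\dots,K$ to get $c\sum_{k=1}^K\Delta_k^{1/\nu} \le \D_1^2 \le \D_0^2$, hence $\min_{1\le k\le K}\Delta_k^{1/\nu} \le \D_0^2/(cK)$; plugging in $c$, using the exponent identity $2+(1-\nu)/\nu = (\nu+1)/\nu$, and raising to the power $\nu$ gives exactly
\[
\min_{1\le k\le K}\ f(x^k)-f^\star \ \le\ \Big(\frac{\nu+1}{4\nu}\Big)^{\nu}\frac{L_\nu\,\dist(x^0,\cX^\star)^{\nu+1}}{K^{\nu}}.
\]
I do not anticipate a real obstacle: Lemma~\ref{lemma:cocoercivity Holder inequality} does the heavy lifting, and the only subtlety is realizing that at $\gamma=2$ one must route the stray gradient-norm factor through convexity (a lower bound) rather than through smoothness. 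Note also that, unlike Theorem~\ref{thm:Holder smooth growth}, no parity assumption on $K$ is needed.
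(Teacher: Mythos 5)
Your proof is correct and matches the paper's argument step for step: apply Lemma~\ref{lemma:cocoercivity Holder inequality} at $y=x^\star$ (using $\nabla f(x^\star)=0$), observe the exact cancellation of the $\Delta_k$ term at $\gamma=2$, lower-bound $\|\nabla f(x^k)\|\ge\Delta_k/\D_0$ via convexity and Fej\'er monotonicity to convert the $\|\nabla f(x^k)\|^{(1-\nu)/\nu}$ factor into a power of $\Delta_k$, and telescope the recursion $\D_{k+1}^2\le\D_k^2-c\,\Delta_k^{1/\nu}$. The only cosmetic improvement is that you telescope from $k=1$ to $K$, which lines the sum up cleanly with the $\min_{1\le k\le K}$ in the statement (the paper's sum from $k=0$ to $K-1$ is a harmless off-by-one in the same argument).
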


\begin{remark}
When $\gamma=2$, the upper bound $\cO(1/K^\nu)$ is slightly weaker than the result $\cO(1/K^{(\nu + 1)/2})$ in Theorem \ref{thm:Holder smooth growth} for $\nu \in (0,1]$. In the case $\nu = 1$ (i.e., $L$-smoothness), the two bounds coincide. We argue that the rate $\cO(1/K^\nu)$ is reasonable because when $\nu \rightarrow 0$ (the nonsmooth case), $2$-\texttt{PolyakGD} may not converge. For example, consider the absolute value function $|x|$ with initial point $x^0 = 1$; $2$-\texttt{PolyakGD} oscillates between the two points $-1$ and $1$.
\end{remark}

As a byproduct, the tighter inequality in Lemma \ref{lemma:cocoercivity Holder inequality} can also be used to establish an $\cO(1/K^{\nu/2})$ convergence rate for the gradient norm. According to Theorem \ref{thm:smooth-cvx-lb gradient norm}, this upper bound is tight.
\begin{theorem}\label{thm:gradient norm convergence}
    Suppose the objective function $f$ satisfies $(\nu, L_\nu)$-{\hd} smoothness. For any total iteration count $K \ge 1$, the iterates generated by {\polyak} with $\gamma \in (0, 2]$ satisfy
    \begin{align*}
        \min_{1 \le k \le K} \ \|\nabla f(x^k)\| \le \frac{(\nu+1)^{\nu} L_\nu \dist(x^0, \cX^\star)^{\nu}}
         {2^{\frac{\nu}{2}} \gamma^{\frac{\nu}{2}} \nu^{\nu} K^{\frac{\nu}{2}}}.
    \end{align*}
\end{theorem}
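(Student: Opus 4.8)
The plan is to reuse the Fej\'er-monotonicity expansion behind Equation~\eqref{eq:bound optimality distance}, but to retain the extra curvature term supplied by Lemma~\ref{lemma:cocoercivity Holder inequality} instead of discarding it. Fix an arbitrary $x^\star \in \cX^\star$ and expand
\[
\|x^{k+1} - x^\star\|^2 = \|x^k - x^\star\|^2 - 2\gamma\,\alpha_k \inner{\nabla f(x^k)}{x^k - x^\star} + \gamma^2 \alpha_k^2 \|\nabla f(x^k)\|^2.
\]
Applying Lemma~\ref{lemma:cocoercivity Holder inequality} with $y = x^\star$ and $\nabla f(x^\star) = 0$ gives $\inner{\nabla f(x^k)}{x^k - x^\star} \ge (f(x^k) - f^\star) + \frac{\nu}{(\nu+1)L_\nu^{1/\nu}}\|\nabla f(x^k)\|^{(\nu+1)/\nu}$. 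Since $\alpha_k \ge 0$ and $\gamma > 0$, I would substitute this lower bound, and then use the defining identity $\alpha_k\|\nabla f(x^k)\|^2 = f(x^k) - f^\star$ to collapse the two $(f(x^k)-f^\star)$ contributions into $-\gamma(2-\gamma)\alpha_k(f(x^k)-f^\star) \le 0$; this is exactly where $\gamma \in (0,2]$ is used, and it is also what makes the argument valid at the endpoint $\gamma = 2$. Dropping this nonpositive term leaves
\[
\|x^{k+1} - x^\star\|^2 \le \|x^k - x^\star\|^2 - \frac{2\gamma\nu}{(\nu+1)L_\nu^{1/\nu}}\,\alpha_k\|\nabla f(x^k)\|^{(\nu+1)/\nu}.
\]

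Next I would turn the right-hand decrease into a pure gradient-norm quantity. Using $\alpha_k = (f(x^k)-f^\star)/\|\nabla f(x^k)\|^2$ one gets $\alpha_k\|\nabla f(x^k)\|^{(\nu+1)/\nu} = (f(x^k)-f^\star)\,\|\nabla f(x^k)\|^{(1-\nu)/\nu}$, and the second inequality in Equation~\eqref{eq:Holder smooth inequalities}, rearranged as $f(x^k) - f^\star \ge \frac{\nu}{(\nu+1)L_\nu^{1/\nu}}\|\nabla f(x^k)\|^{(\nu+1)/\nu}$, yields $\alpha_k\|\nabla f(x^k)\|^{(\nu+1)/\nu} \ge \frac{\nu}{(\nu+1)L_\nu^{1/\nu}}\|\nabla f(x^k)\|^{2/\nu}$. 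Plugging this into the previous display produces the clean recursion
\[
\|x^{k+1} - x^\star\|^2 \le \|x^k - x^\star\|^2 - \frac{2\gamma\nu^2}{(\nu+1)^2 L_\nu^{2/\nu}}\|\nabla f(x^k)\|^{2/\nu}.
\]

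Finally, I would take $x^\star$ to be the projection of $x^0$ onto $\cX^\star$ (so $\|x^0 - x^\star\| = \dist(x^0,\cX^\star)$), sum the recursion over $k = 1,\dots,K$, bound the first term using Fej\'er monotonicity ($\|x^1 - x^\star\|^2 \le \|x^0 - x^\star\|^2$, which is already recorded in Equation~\eqref{eq:bound optimality distance}), and drop the nonnegative $\|x^{K+1}-x^\star\|^2$; this gives $\sum_{k=1}^K \|\nabla f(x^k)\|^{2/\nu} \le \frac{(\nu+1)^2 L_\nu^{2/\nu}}{2\gamma\nu^2}\dist(x^0,\cX^\star)^2$. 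Bounding $\min_{1\le k\le K}\|\nabla f(x^k)\|^{2/\nu}$ by the average over the $K$ terms and raising both sides to the power $\nu/2$ yields the stated bound $\frac{(\nu+1)^\nu L_\nu \dist(x^0,\cX^\star)^\nu}{2^{\nu/2}\gamma^{\nu/2}\nu^\nu K^{\nu/2}}$. I do not anticipate a real obstacle: the content is entirely that Lemma~\ref{lemma:cocoercivity Holder inequality} supplies precisely the $\|\nabla f(x^k)\|^{(\nu+1)/\nu}$ term that the plain Polyak analysis throws away, and the only things requiring care are the exponent bookkeeping ($(\nu+1)/\nu$, $(1-\nu)/\nu$, $2/\nu$) and the sign of the $\gamma(2-\gamma)$ term at $\gamma = 2$.
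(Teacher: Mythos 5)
Your proposal is correct and follows essentially the same route as the paper: expand $\|x^{k+1}-x^\star\|^2$, retain the extra $\|\nabla f(x^k)\|^{(\nu+1)/\nu}$ term from Lemma~\ref{lemma:cocoercivity Holder inequality}, discard the nonpositive $-\gamma(2-\gamma)\alpha_k(f(x^k)-f^\star)$ term, upgrade $\alpha_k\|\nabla f(x^k)\|^{(\nu+1)/\nu}$ to $\tfrac{\nu}{(\nu+1)L_\nu^{1/\nu}}\|\nabla f(x^k)\|^{2/\nu}$ via the gradient--gap inequality, and telescope. The only cosmetic difference is that you invoke the second inequality of \eqref{eq:Holder smooth inequalities} for the lower bound on $\Delta_k$ while the paper re-applies Lemma~\ref{lemma:cocoercivity Holder inequality} at $(x^\star, x^k)$; these give the identical inequality, so the argument is the same.
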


\paragraph{Global curvature bound.} We now consider a more general upper curvature bound than {\hd} smoothness, known as the global curvature bound \citep{nesterov2025universal,doikov2025universal}. This bound is intrinsic as it does not require the objective function $f$ to belong to any specific function class; it is defined by the following quantity:
\begin{equation}\label{eq:gcb}
    \hat{\mu}_f(t) \coloneqq \sup_{ \substack{ \|x-y\| \le t, \\ \alpha \in (0,1)} } \frac{|\alpha f(x) + (1-\alpha)f(y) - f(\alpha x + (1-\alpha)y)|}{\alpha(1-\alpha)}.
\end{equation}
Given $\hat{\mu}_f$, its inverse function  $s_f$, known as the complexity gauge, corresponds to a stationary criterion for $f$ (see Definition 1 in \cite{nesterov2025universal}). Specifically, it satisfies $\hat{\mu}_f(s_f(t)) = t$ for all $t \ge 0$. The following theorem shows that {\polyak} also adapts to this global curvature bound, matching the result of the universal primal gradient method in \cite{nesterov2025universal}.
\begin{theorem}\label{thm:gcb convergence}
    Let $\gamma \in (0,2)$ and $x^0 \in \reals^n$. Suppose $\hat{\mu}_f $ is well-defined for any $t \in [0, \infty)$ and satisfies $\lim_{t \rightarrow 0} \hat{\mu}(t) / t = 0$. For any total iteration count $K \ge 1$, the iterates generated by {\polyak} satisfy
    \begin{align*}
       \min_{1 \le k \le K} \ f(x^k) - f^\star \le \hat{\mu}_f\left(\frac{3  \dist(x^0, \cX^\star)^2}{\sqrt{K}}\right).
    \end{align*}
    Consequently, given an accuracy $\varepsilon > 0$, {\polyak} guarantees $\min_{1 \leq k \leq K} \ f(x^k) - f^\star \le \varepsilon$ after at most $K = \cO(\dist(x^0, \cX^\star)^2/s^2_f(\varepsilon))$ iterations.
\end{theorem}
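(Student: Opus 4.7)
The plan is to universalize the Fej\'er-descent argument of Theorem \ref{thm:Holder smooth growth}, replacing the {\hd}-smoothness gradient bound with one derived directly from the global curvature bound $\hat{\mu}_f$ and its inverse $s_f$. Step one extracts a pointwise first-order upper model from the abstract definition \eqref{eq:gcb}. For arbitrary $x,y\in\reals^n$, I would let $z_\alpha = \alpha x + (1-\alpha)y$ and rearrange
$$\alpha f(x) + (1-\alpha) f(y) - f(z_\alpha) \le \alpha(1-\alpha)\,\hat{\mu}_f(\|x-y\|)$$
to isolate $f(y)$. Letting $\alpha \to 1^-$ and using differentiability of $f$ so that $[f(z_\alpha)-f(x)]/(1-\alpha) \to \langle\nabla f(x), y-x\rangle$ produces the key inequality
$$f(y) \le f(x) + \langle \nabla f(x), y - x\rangle + \hat{\mu}_f(\|y-x\|), \quad \text{for all } x,y\in\reals^n,$$
which is the universal analogue of the first line of \eqref{eq:Holder smooth inequalities}; the assumption $\lim_{t\to 0}\hat{\mu}_f(t)/t = 0$ ensures this is a genuine first-order model.

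Step two converts this into a Polyak-friendly gradient-norm estimate. I would apply the key inequality to $y = x - \eta\nabla f(x)$ for arbitrary $\eta>0$, combine with $f(y)\ge f^\star$, and set $t = \eta\|\nabla f(x)\|$, obtaining
$$t\,\|\nabla f(x)\| \le f(x)-f^\star + \hat{\mu}_f(t), \quad t>0.$$
Choosing $t = s_f(f(x)-f^\star)$ so that $\hat{\mu}_f(t) = f(x)-f^\star$ balances the right-hand side and yields
$$\frac{(f(x)-f^\star)^2}{\|\nabla f(x)\|^2} \ge \tfrac{1}{4}\,s_f\bigl(f(x)-f^\star\bigr)^2,$$
the universal analogue of the second line of \eqref{eq:Holder smooth inequalities} that appears in the proof of Theorem \ref{thm:Holder smooth growth}. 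Substituting into the Fej\'er descent \eqref{eq:bound optimality distance} then gives the one-step decrease
$$\D_{k+1}^2 \le \D_k^2 - \tfrac{\gamma(2-\gamma)}{4}\,s_f(\Delta_k)^2$$
with $\D_k = \dist(x^k,\cX^\star)$ and $\Delta_k = f(x^k)-f^\star$.

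Step three telescopes: summing from $k=0$ to $K-1$ gives $\sum_{k=0}^{K-1} s_f(\Delta_k)^2 \le 4\D_0^2/[\gamma(2-\gamma)]$, so $\min_k s_f(\Delta_k) \le 2\D_0/\sqrt{\gamma(2-\gamma)\,K}$. Since the feasible set in \eqref{eq:gcb} grows monotonically in $t$, both $\hat{\mu}_f$ and $s_f$ are nondecreasing, so applying $\hat{\mu}_f$ inverts this bound into
$$\min_{1\le k\le K}\bigl(f(x^k)-f^\star\bigr) \le \hat{\mu}_f\!\left(\frac{c\,\D_0}{\sqrt{K}}\right)$$
for a modest universal constant $c$, matching the claim up to the stated constant. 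The iteration-complexity consequence then follows by requiring the right-hand side to be at most $\varepsilon$, i.e.\ $c\D_0/\sqrt{K}\le s_f(\varepsilon)$, giving $K = \cO(\D_0^2/s_f^2(\varepsilon))$.

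The main obstacle is step one: passing from the abstract sup-based definition \eqref{eq:gcb} to the pointwise first-order upper model $f(y) \le f(x) + \langle \nabla f(x), y-x\rangle + \hat{\mu}_f(\|y-x\|)$. Once this is in hand, the remainder is a direct universalization of the Theorem \ref{thm:Holder smooth growth} argument with the complexity gauge $s_f$ playing the role of $L_\nu$-based quantities. Monotonicity of $\hat{\mu}_f$ and well-definedness of $s_f$ are minor checks that follow from the monotonicity of the constraint $\|x-y\|\le t$ in the defining sup.
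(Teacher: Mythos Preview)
Your proposal is correct and in fact takes a cleaner, more self-contained route than the paper. The paper does not derive the first-order upper model directly from the definition \eqref{eq:gcb}; instead it imports two results from \cite{nesterov2025universal}: Theorem~1 there gives the inequality
\[
|f(y)-f(x)-\langle\nabla f(x),y-x\rangle|\le 2\hat{\sigma}_f(t)-\hat{\mu}_f(t)+\tfrac{\hat{\mu}_f(t)}{t^2}\|y-x\|^2
\]
in terms of the auxiliary $\hat{\sigma}_f(t)=\int_0^t\hat{\mu}_f(\tau)/\tau\,d\tau$, and Lemma~4 there supplies $\hat{\sigma}_f\le\hat{\mu}_f$. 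The paper then passes through a Fenchel conjugate argument on $2\hat{\sigma}_f$ to obtain $\|\nabla f(x)\|\le 3\Delta/s_f(\Delta)$. Your step one short-circuits all of this: the $\alpha\to 1^-$ limit in \eqref{eq:gcb} delivers $f(y)\le f(x)+\langle\nabla f(x),y-x\rangle+\hat{\mu}_f(\|y-x\|)$ immediately, and your step two yields the sharper constant $\|\nabla f(x)\|\le 2\Delta/s_f(\Delta)$. Both approaches finish identically via the Fej\'er descent \eqref{eq:bound optimality distance} and telescoping. What your route buys is independence from the external reference and a better constant; what the paper's route buys is perhaps a more direct connection to the complexity-gauge framework of \cite{nesterov2025universal}, at the cost of invoking that machinery.
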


\paragraph{Stochastic Polyak stepsize.} To conclude this section, we extend {\polyak} to stochastic Polyak stepsize \citep{loizou2021stochastic}. Consider the following stochastic optimization problem
\[ \min_{x \in \mathbb{R}^{n}} \ f(x) \assign \mathbb{E}_\xi [f (x, \xi)],\]
where $f $ is assumed to satisfy the interpolation condition: $\min_x \mathbb{E} [f (x, \xi)] -\mathbb{E}
[\min_x f (x, \xi)] = 0$. The stochastic {\polyak} update is given by
\begin{align*}
    x^{k+1} = x^k - \gamma \cdot \frac{f (x^k, \xi^k) - \min_x f(x, \xi^k)}{\|\nabla f (x^k, \xi^k)\|^2} \nabla f (x^k, \xi^k),
\end{align*}
where $\mathbb{E}[\nabla f (x^k, \xi^k) | x^k] = \nabla f(x^k)$.
The convergence guarantees of stochastic {\polyak} resemble those of the deterministic case.
\begin{theorem} \label{thm:stochastic-polyak} Suppose the interpolation condition holds and that $\mathcal{X}^{\star} = \{
x^{\star} \}$ is a singleton. Then stochastic {\polyak} has the same rates as Theorem \ref{thm:Holder smooth growth} in terms of $\mathbb{E} [f(x^k) - f^\star]$ .
\end{theorem}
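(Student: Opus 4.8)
The plan is to reduce the stochastic statement to the deterministic analysis of Theorem~\ref{thm:Holder smooth growth} by showing that the same per-iteration distance recursion holds in conditional expectation. First I would fix an arbitrary realization $\xi^k$ and observe that, exactly as in Equation~\eqref{eq:bound optimality distance}, the stochastic \polyak\ update gives
\[
\|x^{k+1} - x^\star\|^2 = \|x^k - x^\star\|^2 - 2\gamma \cdot \alpha_k^{\xi^k}\inner{\nabla f(x^k,\xi^k)}{x^k - x^\star} + \gamma^2 (\alpha_k^{\xi^k})^2 \|\nabla f(x^k,\xi^k)\|^2,
\]
where $\alpha_k^{\xi^k} = (f(x^k,\xi^k) - \min_x f(x,\xi^k))/\|\nabla f(x^k,\xi^k)\|^2$. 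Here the interpolation condition is essential: it forces $f(x^\star,\xi^k) = \min_x f(x,\xi^k)$ almost surely (since $x^\star$ is the common minimizer and the gap $\mathbb{E}[f(x^\star,\xi)] - \mathbb{E}[\min_x f(x,\xi)]$ vanishes), so that convexity of $f(\cdot,\xi^k)$ yields $\inner{\nabla f(x^k,\xi^k)}{x^k - x^\star} \ge f(x^k,\xi^k) - \min_x f(x,\xi^k)$, and the right-hand side collapses to $\|x^k - x^\star\|^2 - \gamma(2-\gamma)(f(x^k,\xi^k) - \min_x f(x,\xi^k))^2/\|\nabla f(x^k,\xi^k)\|^2$, mirroring the last line of~\eqref{eq:bound optimality distance}.

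Next I would take conditional expectation given $x^k$. Since $\cX^\star$ is a singleton, $\D_k = \|x^k - x^\star\|$ is well-defined without an infimum, and the term $(f(x^k,\xi^k) - \min_x f(x,\xi^k))^2/\|\nabla f(x^k,\xi^k)\|^2$ is handled by applying the second inequality in~\eqref{eq:Holder smooth inequalities} pathwise (using $(\nu,L_\nu)$-\hd\ smoothness of $f(\cdot,\xi^k)$, which I would assume holds for each realization, or in an averaged sense sufficient for the bound): $(f(x^k,\xi^k) - \min_x f(x,\xi^k))/\|\nabla f(x^k,\xi^k)\|^{(\nu+1)/\nu} \ge \nu/((\nu+1)L_\nu^{1/\nu})$. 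This gives, after taking expectations, the recursion $\mathbb{E}[\D_{k+1}^2 \mid x^k] \le \D_k^2 - c\,\mathbb{E}[(f(x^k,\xi^k) - \min_x f(x,\xi^k))^{2/(\nu+1)} \mid x^k]$ with the same constant $c$ as in the proof of Theorem~\ref{thm:Holder smooth growth}; then Jensen's inequality (the map $t \mapsto t^{2/(\nu+1)}$ is concave for $\nu \in (0,1]$) together with $\mathbb{E}[f(x^k,\xi^k) - \min_x f(x,\xi^k) \mid x^k] \ge \mathbb{E}[f(x^k,\xi^k)\mid x^k] - \mathbb{E}[\min_x f(x,\xi^k)] = f(x^k) - f^\star$ (again interpolation) converts this into $\mathbb{E}[\D_{k+1}^2 \mid x^k] \le \D_k^2 - c\,(f(x^k) - f^\star)^{2/(\nu+1)}$, and after the growth condition is substituted, $\mathbb{E}[\D_{k+1}^2 \mid x^k] \le \D_k^2 - c\,\rho_r^{2/(\nu+1)}\D_k^{2r/(\nu+1)}$, exactly~\eqref{eq:Holder smooth growth inequality} but in conditional expectation.

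From here the remainder is a routine transcription of the deterministic argument with the tower property: in Case $r=\nu+1$ I take total expectations to get $\mathbb{E}[\D_{k+1}^2] \le (1-c\rho_r^{2/(\nu+1)})\,\mathbb{E}[\D_k^2]$ and close with convexity as before; in Case $r>\nu+1$ I invoke the recursive technical lemma~\ref{lemma:recursive technical lemma} applied to $a_k = \mathbb{E}[\D_k^2]$ — this requires first checking the lemma tolerates a recursion of the form $a_{k+1} \le a_k - c\,\mathbb{E}[(\cdot)^{\tau}] \le a_k - c\,a_k^\tau$, where the second step again uses Jensen on the convex map $t\mapsto t^\tau$ with $\tau = r/(\nu+1) > 1$ applied to $\mathbb{E}[\D_k^{2r/(\nu+1)}] \ge (\mathbb{E}[\D_k^2])^\tau$ — and then average the telescoped $\mathbb{E}[\D_k^2 - \D_{k+1}^2]$ over $k \in \{K/2,\dots,K-1\}$, using $\min_{1\le k\le K}\mathbb{E}[f(x^k)-f^\star]^{2/(\nu+1)} \le \frac{1}{K/2}\sum \mathbb{E}[f(x^k)-f^\star]^{2/(\nu+1)}$ and Jensen one more time to pull the power inside the min. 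The main obstacle I anticipate is bookkeeping the direction of Jensen's inequality at each of the several places it is used — concavity of $t\mapsto t^{2/(\nu+1)}$ versus convexity of $t \mapsto t^{r/(\nu+1)}$ point in opposite directions, and one must be careful that each application weakens the bound in the correct (i.e., usable) direction; a secondary subtlety is being explicit about what regularity is assumed per-realization of $\xi$ (convexity and \hd\ smoothness of $f(\cdot,\xi)$) so that the pathwise inequalities are licensed before any expectation is taken.
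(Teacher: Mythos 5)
Your plan follows the paper's proof essentially step for step: identify that interpolation forces $f(x^\star,\xi)=\min_x f(x,\xi)$ almost surely, reproduce the per-iteration distance recursion pathwise, apply the second \hd-smoothness inequality to each realization, take conditional expectation, use Jensen to push the power past the expectation, and then telescope with the tower property exactly as in Theorem~\ref{thm:Holder smooth growth}. The only route-level novelty (Jensen on $t\mapsto t^{r/(\nu+1)}$ to justify $\mathbb{E}[\D_k^{2r/(\nu+1)}]\ge(\mathbb{E}[\D_k^2])^{r/(\nu+1)}$) is also what the paper does.

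However, there is a genuine error in how you describe the first Jensen application, and you repeat it twice. You claim that $t\mapsto t^{2/(\nu+1)}$ is \emph{concave} for $\nu\in(0,1]$, and at the end you say the two Jensen steps ``point in opposite directions.'' Both statements are wrong: for $\nu\in(0,1]$ we have $\nu+1\in(1,2]$, hence $2/(\nu+1)\in[1,2)$, so $t\mapsto t^{2/(\nu+1)}$ is \emph{convex} (linear only at $\nu=1$). Both Jensen uses in this proof rest on convex maps and go in the same direction, $\mathbb{E}[g(X)]\ge g(\mathbb{E}[X])$. The bound you actually write down, $\mathbb{E}[\D_{k+1}^2\mid x^k]\le\D_k^2-c\,(f(x^k)-f^\star)^{2/(\nu+1)}$, needs $\mathbb{E}[X^{2/(\nu+1)}\mid x^k]\ge\big(\mathbb{E}[X\mid x^k]\big)^{2/(\nu+1)}$, which is exactly the convex inequality; under the concavity you assert, the inequality would reverse and the argument would break. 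So your conclusion is correct but your stated justification is not; as written, a reader following your reasoning literally would derive the wrong direction at the key step. Fix the convexity label (and delete the claim that the two Jensen steps go opposite ways) and the proof matches the paper's.
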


\section{Conclusions}
This work revisits the classical Polyak stepsize and establishes several new results. We establish the tightness of the current convergence results and provide new convergence rates for new function classes. An interesting future direction is how to construct a universal worst-case function for general adaptive stepsizes, beyond just the Polyak stepsize. 

\bibliographystyle{abbrvnat}
\bibliography{ref}

\clearpage
\appendix

\section*{Appendix}

\section{Technical Lemmas}
\begin{lemma}\label{lemma:lb e-gamma} For any $\gamma \in (0, 4)$, the following relation holds for all $K \geq 1$:

\[\left(\frac{8K-2(1 + K)\gamma + \gamma^2}{2K(4-\gamma) + 2\gamma}\right)^{2K} \geq \bigg(\frac{K}{\gamma + K}\bigg)^{2K} \geq \lim_{K \rightarrow \infty} \bigg(\frac{K}{\gamma + K}\bigg)^{2K} = \ee^{-2\gamma}.\]

\end{lemma}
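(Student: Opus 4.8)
The plan is to treat the three relations in the display separately: the first inequality, the bound of the middle quantity by its limit, and the evaluation of the limit itself. For the first inequality, since $2K$ is a positive integer and $x \mapsto x^{2K}$ is increasing on $[0,\infty)$, it suffices to show that both fractions are positive and that $\frac{8K-2(1+K)\gamma+\gamma^2}{2K(4-\gamma)+2\gamma} \ge \frac{K}{\gamma+K}$. Positivity of the denominators is immediate from $\gamma \in (0,4)$ (hence $4-\gamma>0$) together with $K \ge 1$; for the numerator, rewriting $8K-2(1+K)\gamma+\gamma^2 = 2K(4-\gamma)+\gamma^2-2\gamma$ and using $K \ge 1$ yields the lower bound $2(4-\gamma)+\gamma^2-2\gamma = (\gamma-2)^2+4 > 0$. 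Cross-multiplying (legitimate since all four quantities are positive) and expanding both products, the $8K^2$ and $2K^2\gamma$ terms cancel on the two sides, and after dividing by $\gamma>0$ the inequality collapses to $K(4-\gamma)+\gamma(\gamma-2)\ge 0$. Since $K \ge 1$, the left side is at least $(4-\gamma)+\gamma(\gamma-2)=\gamma^2-3\gamma+4=(\gamma-\tfrac32)^2+\tfrac74>0$, which finishes this step.

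For the second inequality, instead of arguing monotonicity of $K \mapsto \bigl(K/(\gamma+K)\bigr)^{2K}$ directly, I would take logarithms: the claim $\left(\frac{K}{\gamma+K}\right)^{2K}\ge \ee^{-2\gamma}$ is equivalent to $-2K\log\!\bigl(1+\tfrac{\gamma}{K}\bigr)\ge -2\gamma$, i.e.\ to $K\log\!\bigl(1+\tfrac{\gamma}{K}\bigr)\le \gamma$, which is exactly the elementary bound $\log(1+t)\le t$ applied at $t=\gamma/K>0$. The closing equality is the standard limit $\bigl(1+\tfrac{\gamma}{K}\bigr)^{-2K}\to \ee^{-2\gamma}$ as $K\to\infty$; combined with the previous step, which shows the sequence never drops below its limit, this confirms the full chain as displayed.

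The only mildly delicate point is the polynomial bookkeeping in the first step: one must expand $(8K-2(1+K)\gamma+\gamma^2)(\gamma+K)$ and $K(2K(4-\gamma)+2\gamma)$ carefully and check that every term quadratic and cubic in $K$ cancels, leaving the clean residual $\gamma\bigl(K(4-\gamma)+\gamma(\gamma-2)\bigr)$. Everything else is routine: the positivity checks are one-line completions of squares, and the second and third relations follow at once from $\log(1+t)\le t$ and the definition of $\ee$. No material from earlier sections, no induction, and no performance-estimation machinery is required.
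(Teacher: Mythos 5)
Your proof is correct, and it takes a genuinely different route from the paper's: the paper does not include a written proof of this lemma and instead defers the verification to a supplementary file at an external link (following the same computer-assisted pattern used for Proposition~\ref{prop:dynamical-system}), whereas you give a fully self-contained elementary argument. Your decomposition is clean: for the first inequality you reduce the comparison of $2K$th powers to a comparison of bases, establish positivity by rewriting the numerator as $2K(4-\gamma)+(\gamma-2)^2-4+\cdots$ (your completed square $(\gamma-2)^2+4$ is right), cross-multiply, and observe that after the $K^2$ terms cancel the difference of the two sides is exactly $\gamma\bigl(K(4-\gamma)+\gamma(\gamma-2)\bigr)$, which at $K\ge 1$ is bounded below by $\gamma\bigl((\gamma-\tfrac32)^2+\tfrac74\bigr)>0$ — I verified the expansion and it collapses as you claim. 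For the second inequality your log reduction to $K\log(1+\gamma/K)\le\gamma$ via $\log(1+t)\le t$ is the right move, and it simultaneously settles the monotone-approach-to-limit claim without needing to argue monotonicity of the sequence. The trade-off is that the paper's approach buys mechanical verification of a messy rational-function inequality, while yours buys readability and keeps the lemma's proof inside the document; for a statement this elementary, your version is arguably preferable to an external pointer.
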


\begin{proof}
The proof is available at
	\href{https://github.com/Gwzwpxz/polyak_proof}{this link}.
\end{proof}

\begin{lemma}\label{lemma:recursive technical lemma}
    Let $\{a_k\}_{k \ge 0}$ be a nonnegative sequence satisfying $a_{k+1} \le a_k - c \cdot a_k^\tau$ for some $c > 0$ and $\tau > 1$. Then it holds that $a_k \le \big(a_0^{1-\tau} + (\tau-1)c k\big)^{-\frac{1}{\tau-1}}$.
\end{lemma}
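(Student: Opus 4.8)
The plan is to pass to the reciprocal-power sequence $b_k \coloneqq a_k^{1-\tau}$ (with the convention $0^{1-\tau} = +\infty$, which is natural since $1-\tau < 0$) and show that it grows at least linearly, namely $b_k \ge b_0 + (\tau-1)ck$ for every $k \ge 0$; the stated bound then follows by inverting. First I would record two preliminary observations. Since $c > 0$ and $a_k^\tau \ge 0$, the recursion gives $a_{k+1} \le a_k$, so the sequence is non-increasing; in particular, once $a_k = 0$ we have $a_j = 0$ for all $j \ge k$, and then $b_j = +\infty$ makes the target inequality trivially true from that index on. On the remaining (``active'') indices where $a_{k+1} > 0$, nonnegativity together with $a_{k+1} \le a_k - c a_k^\tau = a_k\bigl(1 - c a_k^{\tau-1}\bigr)$ forces both $a_k > 0$ and $c a_k^{\tau-1} < 1$, which is precisely the regime in which the one-step estimate below is valid.

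The core step is a one-step increment bound for $b_k$ on an active index. Applying the strictly decreasing map $x \mapsto x^{1-\tau}$ (decreasing because $1-\tau<0$) to $0 < a_{k+1} \le a_k\bigl(1 - c a_k^{\tau-1}\bigr)$ gives
\[
a_{k+1}^{1-\tau} \ \ge\ a_k^{1-\tau}\,\bigl(1 - c a_k^{\tau-1}\bigr)^{1-\tau}.
\]
Now I would invoke Bernoulli's inequality $(1+t)^{r} \ge 1 + rt$, valid for $r \le 0$ and $t > -1$, with $r = 1-\tau \le 0$ and $t = -c a_k^{\tau-1} \in (-1,0)$; this yields $\bigl(1 - c a_k^{\tau-1}\bigr)^{1-\tau} \ge 1 + (\tau-1)c\, a_k^{\tau-1}$, and hence
\[
b_{k+1} \ =\ a_{k+1}^{1-\tau}\ \ge\ a_k^{1-\tau} + (\tau-1)c\ =\ b_k + (\tau-1)c .
\]

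Finally I would conclude by induction on $k$ that $b_k \ge b_0 + (\tau-1)ck$: the base case $k=0$ is an equality, and in the inductive step either $a_k = 0$ or $a_{k+1} = 0$ (handled by the first observation, since $b_{k+1} = +\infty$) or $a_{k+1} > 0$ (handled by the increment bound). Rewriting $b_k \ge b_0 + (\tau-1)ck$ as $a_k^{1-\tau} \ge a_0^{1-\tau} + (\tau-1)ck$ and raising both sides to the power $-1/(\tau-1) < 0$ — again a decreasing transformation — gives the claimed bound $a_k \le \bigl(a_0^{1-\tau} + (\tau-1)ck\bigr)^{-1/(\tau-1)}$. I do not expect any real obstacle here: the only things to watch are the bookkeeping around indices where $a_k$ vanishes and checking that Bernoulli's inequality is applied within its valid range ($r \le 0$, $t > -1$); no nontrivial computation is involved, and the whole argument is simply the discrete counterpart of integrating the separable ODE $\dot a = -c\,a^{\tau}$.
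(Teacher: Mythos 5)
Your proof is correct, and at the structural level it is the same strategy the paper uses: pass to the transformed sequence $b_k \coloneqq a_k^{1-\tau}$, show it grows by at least $(\tau-1)c$ per step, and invert. The only genuine difference is the tool used to obtain the one-step increment bound $b_{k+1} \ge b_k + (\tau-1)c$: you factor the recursion as $a_{k+1} \le a_k\bigl(1 - c\,a_k^{\tau-1}\bigr)$ and apply Bernoulli's inequality $(1+t)^{r} \ge 1 + rt$ (valid for $r \le 0$, $t > -1$) to $\bigl(1 - c\,a_k^{\tau-1}\bigr)^{1-\tau}$, whereas the paper applies Lagrange's mean value theorem to $\phi(t) = t^{1-\tau}$ on the interval $[a_{k+1}, a_k]$ and then bounds $\phi'(\xi_k) = (1-\tau)\xi_k^{-\tau}$ using the monotonicity of $t \mapsto t^{-\tau}$. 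These are two standard ways of packaging the same estimate, and both lead to exactly the same recursion $a_{k+1}^{1-\tau} \ge a_k^{1-\tau} + (\tau-1)c$. One small point in your favor: you explicitly track what happens once some $a_k = 0$ (the tail becomes identically zero and the bound is vacuous with the convention $0^{1-\tau}=+\infty$), whereas the paper tacitly assumes $a_k > 0$ throughout so that $\phi$ and its derivative are defined; this is a minor but real gap in the paper's write-up that your proposal closes.
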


\begin{proof}
    Let $\phi(t) \coloneqq t^{1-\tau}$, $t > 0$. Note that $\phi $ is strictly decreasing over $(0,\infty)$ when $\tau > 1$ since $\phi^\prime(t) = (1-\tau)t^{-\tau } < 0$. Fix $k \ge 0$. Recall the sequence satisfies $a_k - a_{k+1} \ge c \cdot a_k^\tau  > 0$. Then by Lagrange's mean value theorem, there exists a value $\xi_k \in [a_{k+1}, a_k]$ such that
    \begin{align*}
        a_{k+1}^{1-\tau} - a_k^{1-\tau} = \phi(a_{k+1}) - \phi(a_k) = \phi^\prime(\xi_k)(a_{k+1} - a_k) = (\tau-1)\xi_k^{-\tau}(a_k - a_{k+1}).
    \end{align*}
    Since $\phi $ is decreasing, then $\phi(\xi_k) \ge \phi(a_k)$, which yields that
    \begin{align*}
        a_{k+1}^{1-\tau} - a_k^{1-\tau} \ge (\tau-1)a_k^{-\tau}(a_k - a_{k+1}) \ge (\tau-1)a_k^{-\tau} c \cdot a_k^\tau = (\tau-1)c.
    \end{align*}
    Therefore, we directly obtain $a_k^{1-\tau} \ge a_0^{1-\tau} + (\tau-1)ck$, which implies that
    \begin{align*}
       a_k \le \big(a_0^{1-\tau} + (\tau-1)c k\big)^{-\frac{1}{\tau-1}}.
    \end{align*}
    The proof is completed.
\end{proof}

\begin{lemma}\label{lemma:Holder smooth |Ax|}
    For any symmetric matrix $A \in \reals^{n \times n}$ and $\nu \in (0,1]$, the function $p_A(x) = \|A x\|^{1+\nu}$ is $(\nu, L_\nu)$-{\hd} smooth, where $L_\nu = 2^{1-\nu}(\nu + 1)\|A\|^{\nu + 1}$.
\end{lemma}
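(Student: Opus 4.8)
The plan is to reduce everything to a single inequality for the ``normalized power map'' $w\mapsto \|w\|^{\nu-1}w$ and then establish that inequality by a one‑variable concavity argument. First I would write $p_A = g\circ A$ with $g(w)=\|w\|^{1+\nu}$. Since $\nu>0$, the map $g$ is $C^1$ on $\reals^n$ with $\nabla g(w)=(1+\nu)\|w\|^{\nu-1}w$ and $\nabla g(0)=0$ (the only point needing care: $\|\nabla g(w)\|=(1+\nu)\|w\|^{\nu}\to 0$ as $w\to 0$). By the chain rule and $A^\top=A$ one gets $\nabla p_A(x)=(1+\nu)\,A\big(\|Ax\|^{\nu-1}Ax\big)$, hence
\[
 \|\nabla p_A(x)-\nabla p_A(y)\| \;\le\; (1+\nu)\,\|A\|\;\big\|\,\|Ax\|^{\nu-1}Ax-\|Ay\|^{\nu-1}Ay\,\big\|.
\]
So it suffices to prove $\big\|\,\|u\|^{\nu-1}u-\|v\|^{\nu-1}v\,\big\|\le 2^{1-\nu}\|u-v\|^{\nu}$ for all $u,v\in\reals^n$; combining this with $\|Ax-Ay\|\le\|A\|\,\|x-y\|$ gives exactly $L_\nu=2^{1-\nu}(\nu+1)\|A\|^{\nu+1}$.

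For the key inequality I would assume without loss of generality $a\coloneqq\|u\|\ge b\coloneqq\|v\|$ (the case $b=0$ is immediate since $2^{1-\nu}\ge 1$). Writing $u=a\hat u$, $v=b\hat v$ with unit vectors and $c\coloneqq\inner{\hat u}{\hat v}\in[-1,1]$, both $N\coloneqq\big\|\,\|u\|^{\nu-1}u-\|v\|^{\nu-1}v\,\big\|^2=a^{2\nu}+b^{2\nu}-2a^{\nu}b^{\nu}c$ and $M\coloneqq\|u-v\|^2=a^2+b^2-2abc$ are affine in $c$. Solving for $c$ and substituting yields $N=(ab)^{\nu-1}M+P$ with $P=(a^{\nu-1}-b^{\nu-1})(a^{\nu+1}-b^{\nu+1})$, and $P\le 0$ because $a\ge b>0$ and $\nu\le 1$ force the two factors to have opposite signs.

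It then remains to show $N\le 4^{1-\nu}M^{\nu}$, i.e.\ $\psi(M)\coloneqq 4^{1-\nu}M^{\nu}-(ab)^{\nu-1}M-P\ge 0$ at the true value of $M$. The function $\psi$ is concave on $(0,\infty)$ since $\nu\le 1$. As $c$ ranges over $[-1,1]$, $M$ ranges over $[(a-b)^2,(a+b)^2]$, and at those endpoints $N$ equals $(a^{\nu}-b^{\nu})^2$ and $(a^{\nu}+b^{\nu})^2$ respectively, so $\psi\ge 0$ there is exactly $a^{\nu}-b^{\nu}\le(a-b)^{\nu}$ (subadditivity of $t\mapsto t^{\nu}$) and $a^{\nu}+b^{\nu}\le 2^{1-\nu}(a+b)^{\nu}$ (concavity / power‑mean), both standard for $\nu\in(0,1]$. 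A concave function nonnegative at both endpoints of an interval is nonnegative on the whole interval, so $\psi(M)\ge 0$; taking square roots gives $\big\|\,\|u\|^{\nu-1}u-\|v\|^{\nu-1}v\,\big\|\le 2^{1-\nu}\|u-v\|^{\nu}$, completing the argument.

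The only real content is this normalized‑power‑map inequality — equivalently, the $(\nu,2^{1-\nu}(\nu+1))$‑\hd{} smoothness of $\|\cdot\|^{1+\nu}$, the identity‑matrix case; the passage from $p_A$ back to it is routine. The step I expect to be the crux, and the one that keeps the constant sharp, is eliminating the angle variable $c$ so that a two‑vector estimate collapses to a one‑variable concavity check: a direct triangle‑inequality split of $\|a^{\nu}\hat u-b^{\nu}\hat v\|$ into radial and tangential parts loses a multiplicative constant. The one subtlety worth flagging explicitly is the differentiability of $\|\cdot\|^{1+\nu}$ at the origin, which is precisely where the hypothesis $\nu>0$ is used.
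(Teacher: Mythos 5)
Your argument is correct and gives exactly the claimed constant, but it is a genuinely different (more self-contained) route than the paper's. The paper's proof is two lines: it \emph{cites} Theorem~6.3 of Rodomanov and Nesterov (2020) for the fact that $p(w)=\|w\|^{1+\nu}$ is $(\nu,2^{1-\nu}(\nu+1))$-\hd{} smooth, and then performs the same chain-rule/operator-norm reduction you do to pick up the extra $\|A\|^{\nu+1}$. You instead supply a direct proof of the cited fact via the inequality $\bigl\|\,\|u\|^{\nu-1}u-\|v\|^{\nu-1}v\,\bigr\|\le 2^{1-\nu}\|u-v\|^{\nu}$: by eliminating the inner-product variable $c=\inner{\hat u}{\hat v}$ you reduce the two-vector bound to a one-variable statement $\psi(M)\ge 0$ on $[(a-b)^2,(a+b)^2]$, where $\psi(M)=4^{1-\nu}M^{\nu}-(ab)^{\nu-1}M-P$ is concave for $\nu\le 1$, and the endpoint checks are exactly the scalar inequalities $a^\nu-b^\nu\le(a-b)^\nu$ and $a^\nu+b^\nu\le 2^{1-\nu}(a+b)^\nu$. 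I checked the algebra: $P=(a^{\nu-1}-b^{\nu-1})(a^{\nu+1}-b^{\nu+1})\le 0$ indeed, the identity $N=(ab)^{\nu-1}M+P$ is correct, and the $b=0$ and $a=b$ degeneracies are handled. What the paper's route buys is brevity and a clean delegation to a known reference; what your route buys is a short, fully elementary, and constant-sharp derivation that does not depend on the external theorem, together with a clear explanation of \emph{why} the constant $2^{1-\nu}$ is the right one (it is forced at the antipodal endpoint $c=-1$). Both are valid; if you wanted to match the paper's economy you could simply quote the reference, but your proof is a worthwhile replacement if self-containment is preferred.
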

\begin{proof}
    According to Theorem 6.3 in \cite{rodomanov2020smoothness}, the function $p(x) = \|x\|^{1+\nu}$ is $(\nu,2^{1-\nu}(\nu + 1))$-{\hd} smooth. Note that $\nabla p_A(x) = \nabla (p(Ax)) = A \nabla p(Ax)$. Hence, for any $x, y \in \reals^n$, we have
    \begin{align*}
        \|\nabla p_A(x) - \nabla p_A(y)\| = \ &\|A(\nabla p(Ax) - \nabla p(Ay))\| \\
        \le \ &\|A\| \cdot 2^{1-\nu}(\nu + 1)\|Ax - Ay\|^\nu \\
        \le \ &2^{1-\nu}(\nu + 1)\|A\|^{\nu + 1}\|x - y\|^\nu.
    \end{align*}
    Setting $L_\nu = 2^{1-\nu}(\nu + 1)\|A\|^{\nu + 1}$ completes the proof.
\end{proof}

\section{Missing Proofs in Section \ref{section:tight convergence}}
\subsection{Proof of Proposition \ref{prop:Huber loss}}

   First, observe that the Huber loss is a quadratic function with condition number $1$ when $\|x\| \le 1/(2K+1)$. By Theorem 1 in \cite{hazan2019revisiting}, 1-\texttt{PolyakGD} converges to $x^\star$ in one step once the iterate enters this region. Hence, it suffices to consider an initial point $x^0$ satisfying $\|x^0\| > 1/(2K+1)$. We now show that after a single iteration, the iterate $x^1$ generated by $1$-\texttt{PolyakGD} satisfies $\|x^1\| \le 1/(2K+1)$. Specifically, the stepsize is given by
   \begin{align*}
       \alpha_0 = \frac{\cH_K(x^0) - \cH_K^\star}{\|\nabla \cH_K(x^0)\|^2} = \frac{\frac{1}{2K+1}\|x^0\| - \frac{1}{2(2K+1)^2}}{\frac{1}{(2K+1)^2}} = (2K+1)\|x^0\| - \frac{1}{2}.
   \end{align*}
   Substituting the stepsize into the update rule yields
   \begin{align*}
       x^1 = x^0 - \alpha_0  \nabla \cH_K(x^0) = x^0 - (2K+1)\|x^0\| \cdot \frac{x^0}{(2K+1)\|x^0\|} + \frac{1}{2}\frac{x^0}{(2K+1)\|x^0\|} = \frac{x^0}{2(2K+1)\|x^0\|}.
   \end{align*}
   Clearly, we have $\|x^1\| = 1/(2(2K+1)) < 1/(2K+1)$, which completes the proof.

\subsection{Proof of Theorem \ref{thm:Holder smooth-cvx-lb}}

    Let $\Tilde{q} $ be the quadratic function defined in  Equation \eqref{eq:scaled lower bound function} and Theorem \ref{thm:smooth-cvx-lb}. Define $q_{\nu}(x) \coloneqq\Tilde{q}(x)^{\frac{\nu+1}{2}}$. Then, we have
   \begin{align*}
      \nabla q_\nu(x) = \frac{\nu+1}{2}\Tilde{q}(x)^{\frac{\nu-1}{2}}\nabla \Tilde{q}(x).
   \end{align*}
    Consequently, the $\gamma$-\texttt{PolyakGD} update for $q_{\nu} $ is given by:
    \begin{align*}
        x^{k+1} = \ &x^k - \gamma \cdot \frac{q_\nu(x^k)}{\|\nabla q_\nu(x^k)\|^2} \nabla q_\nu(x^k) \\
        = \ &x^k - \gamma \cdot \frac{\Tilde{q}(x^k)^{\frac{\nu+1}{2}}}{\left(\frac{\nu + 1}{2}\right)^2 \Tilde{q}(x^k)^{\nu-1}} \frac{\nu+1}{2}\Tilde{q}(x^k)^{\frac{\nu-1}{2}}\frac{1}{\|\nabla \Tilde{q}(x^k)\|^2}\nabla \Tilde{q}(x^k) \\
        = \ &x^k - \frac{2\gamma}{\nu + 1} \frac{\Tilde{q}(x^k)}{\|\nabla \Tilde{q}(x^k)\|^2}\nabla \Tilde{q}(x^k).
    \end{align*}
    Observe that the trajectory coincides with that of $\frac{2\gamma}{\nu + 1}$-\texttt{PolyakGD} applied to $\Tilde{q} $. Given that $\frac{2\gamma}{\nu + 1} \in (0,4)$ for all $\gamma \in (0,2)$ and $\nu \in (0,1]$, a direct application of Theorem \ref{thm:smooth-cvx-lb} establishes shows, for a specific initial point $x^0$, that
    \begin{align*}
         \min_{1 \le k \le K} \ \frac{\Tilde{q}(x^{k}) - \Tilde{q}^\star}{L \|x^0 - x^{\star}\|^2} \geq \frac{\gamma}{2\ee^{\frac{4\gamma}{\nu + 1}}((2(\nu + 1) - \gamma)K + \gamma)}.
    \end{align*}
Using the fact that $q_\nu^\star = \Tilde{q}^\star = 0$,  we obtain
    \begin{equation}\label{eq:lb Holder smooth}
        \min_{1 \le k \le K} \ q_{\nu}(x^k) = \min_{1 \le k \le K} \ \Tilde{q}(x^k)^{\frac{\nu+1}{2}} \ge \Bigg(\frac{\gamma L\|x^0 - x^\star\|^2}{2\ee^{\frac{4\gamma}{\nu + 1}}((2(\nu + 1) - \gamma)K + \gamma)}\Bigg)^{\frac{\nu+1}{2}}.
    \end{equation}
    Recall that
    \begin{align*}
        \Tilde{q}(x) = \frac{L}{\kappa} q(x) = \frac{1}{2}\langle x, A x \rangle = \frac{1}{2}\|A^{1/2} x\|^2, \ A = \begin{pmatrix}
    L & 0 \\
    0 & L/\kappa
    \end{pmatrix}.
    \end{align*}
    Hence, we have $q_{\nu}(x) = 2^{-\frac{\nu + 1}{2}}\|A^{1/2}x\|^{\nu + 1}$. By technical lemma \ref{lemma:Holder smooth |Ax|}, $q_\nu $ is $(\nu,L_\nu)$-{\hd} smooth with \[L_\nu = 2^{-\frac{\nu + 1}{2}} 2^{1-\nu}(\nu + 1)\|A^{1/2}\|^{\nu + 1} = 2^{\frac{-3\nu + 1}{2}}(\nu + 1)L^{\frac{\nu + 1}{2}}.\] Finally, combining this with Equation \eqref{eq:lb Holder smooth} yields
    \begin{align*}
       \min_{1 \le k \le K} \ \frac{q_\nu(x^k) - q_\nu^\star}{L_\nu\|x^0 - x^\star\|^{\nu + 1}} \ge \frac{2^{\nu - 1}\gamma^{\frac{\nu + 1}{2}}}{\ee^{2\gamma} (\nu + 1)((2(\nu + 1) - \gamma)K + \gamma)^{\frac{\nu + 1}{2}}}.
    \end{align*} 
    This completes the proof.

\subsection{Proof of Theorem \ref{thm:smooth-cvx-lb gradient norm}}

    Let us start with the $L$-smooth convex case. Consider the same setting as in Theorem \ref{thm:smooth-cvx-lb}. By definition of $\Tilde{q} $,
    \begin{align*}
        \|\nabla \Tilde{q}(x^k)\|^2 = \frac{L^2}{\kappa^2}\|\nabla q(x^k)\|^2 = \frac{L^2}{\kappa^2}\left(\kappa^2 (x_{(1)}^k)^2 + (x_{(2)}^k)^2\right), \ \text{ for all } k \ge 0.
    \end{align*}
    Recall that the trajectory satisfies the relationship
    \begin{align*}
       (x_{(2)}^k)^2 = \frac{4 \kappa^2 - \gamma \kappa (\kappa + 1)}{\gamma ( \kappa + 1) - 4}  (x_{(1)}^k)^2, \ \left| \frac{x_{(1)}^{k + 1}}{x_{(1)}^k} \right| = \frac{\kappa - 1}{\kappa + 1}, \ \text{ for all } k \ge 0. 
    \end{align*}
    Consequently, for all $k \ge 1$, the trajectory generated by {\polyak} satisfies
    \begin{equation}\label{eq:gradient norm k tilde q}
       \|\nabla \Tilde{q}(x^k)\|^2 = \Big(\frac{\kappa - 1}{\kappa + 1}\Big)^{2}\|\nabla \Tilde{q}(x^{k-1})\|^2 =\Big(\frac{\kappa - 1}{\kappa + 1}\Big)^{2 k}\|\nabla \Tilde{q}(x^{0})\|^2 = \Big(\frac{\kappa - 1}{\kappa + 1}\Big)^{2 k}\frac{\gamma L^2}{(4 - \gamma) \kappa}. 
    \end{equation}
    According to the proof of Theorem \ref{thm:smooth-cvx-lb}, for all $k \ge 1$, the function value satisfies
    \begin{equation}\label{eq:function value k tilde q}
       \Tilde{q}(x^{k}) = \Big(\frac{\kappa - 1}{\kappa + 1}\Big)^{2 k} \Tilde{q}(x^0) = \Big(\frac{\kappa - 1}{\kappa + 1}\Big)^{2 k}\frac{2L}{(4-\gamma)(\kappa + 1)}. 
    \end{equation}
    Following a similar argument to that in the proof of Theorem \ref{thm:Holder smooth-cvx-lb}, let $q_\nu(x) = \Tilde{q}(x)^{\frac{\nu + 1}{2}}$. Recall from the proof of Theorem \ref{thm:Holder smooth-cvx-lb} that the trajectory of {\polyak} generated on $q_\nu$ coincides with that of $\frac{2\gamma}{\nu + 1}$-\texttt{PolyakGD} applied to $\Tilde{q}$. Since $\nabla q_\nu(x) = \frac{\nu+1}{2}\Tilde{q}(x)^{\frac{\nu-1}{2}}\nabla \Tilde{q}(x)$, by substituting $\frac{2\gamma}{\nu + 1}$ for $\gamma$ in Equations \eqref{eq:gradient norm k tilde q} and \eqref{eq:function value k tilde q}, we obtain that the trajectory satisfies
     \begin{align*}
         \|\nabla q_\nu(x^k)\| = \ &\frac{\sqrt{\gamma}(\nu+1)^{\frac{\nu+1}{2}} L^{\frac{\nu+1}{2}}}
{2\sqrt{\kappa}\,\big(2(\nu+1)-\gamma\big)^{\frac{\nu}{2}} (\kappa+1)^{\frac{\nu-1}{2}}}
\Big(\frac{\kappa-1}{\kappa+1}\Big)^{k\nu} \\
        = \ &\frac{2^{\frac{3(\nu-1)}{2}}\sqrt{\gamma}\,(\nu+1)^{\frac{\nu-1}{2}} L_\nu\|x^0 - x^\star\|^{\frac{\nu + 1}{2}}}
{\sqrt{\kappa}\,\big(2(\nu+1)-\gamma\big)^{\frac{\nu}{2}}(\kappa+1)^{\frac{\nu-1}{2}}}
\Big(\frac{\kappa-1}{\kappa+1}\Big)^{k\nu}, \ \text{ for all } k \ge 1,
     \end{align*}
where the second equality holds since $L_\nu = 2^{\frac{-3\nu + 1}{2}}(\nu + 1)L^{\frac{\nu + 1}{2}}$, $\|x^0\| = 1$ and $\|x^\star\| = 0$. Note that the gradient norm is monotone in $k$. Choosing $\kappa = 4 K / \gamma + \gamma/(4-\gamma)$, we conclude
    \begin{align*}
        &\min_{1 \le k \le K} \ \frac{\|\nabla q_\nu(x^k)\|}{L_\nu \|x^0 - x^\star\|^{\frac{\nu + 1}{2}}} \\
        = \ &\frac{2^{\frac{\nu-1}{2}} \gamma^{\frac{\nu+1}{2}} (4-\gamma)^{\frac{\nu}{2}}
(\nu+1)^{\frac{\nu-1}{2}}}
{\sqrt{4K(4-\gamma)+\gamma^2} 
\big((4-\gamma)K+\gamma\big)^{\frac{\nu-1}{2}}
\big(2(\nu+1)-\gamma\big)^{\frac{\nu}{2}}}
\left(
\frac{4K - K\gamma - \gamma + \frac{\gamma^2}{2}}
{(4-\gamma)K+\gamma}
\right)^{K\nu} \\
     \ge \ &\frac{2^{\frac{\nu-1}{2}}\gamma^{\frac{\nu+1}{2}}
(\nu+1)^{\frac{\nu-1}{2}}\sqrt{4-\gamma}}
{\ee^{\gamma\nu}\sqrt{4(4-\gamma)+\gamma^2}(2(\nu+1)-\gamma)^{\frac{\nu}{2}}K^{\frac{\nu}{2}}}.
    \end{align*}
    The proof is completed.

\section{Missing Proofs in Section \ref{section:universality}}
\subsection{Proof of Lemma \ref{lemma:star convexity of f}}

    First note that 
    \begin{align*}
        h(g(x_g^\star)) \ge h(g(x)) + \xi(x)(g(x_g^\star) - g(x)), \ \text{ for all } \xi(x) \in \partial h(g(x)),
    \end{align*}
    by the convexity of $h$. Also, since the function $h$ is nondecreasing, we have $\xi(x) \ge 0$, which implies
    \begin{align*}
        f(x) - f(x^\star_g) \le \xi(x)(g(x) - g(x_g^\star)) \le \xi(x)(\inner{\nabla g(x)}{x - x_g^\star}).
    \end{align*}
    This completes the proof.

\subsection{Proof of Lemma \ref{lemma:cocoercivity Holder inequality}}

    Define $x^+ \coloneqq\argmin_x  \{f (y) + \langle \nabla f (y), x - y
\rangle + \frac{L_{\nu}}{\nu + 1} \| x - y \|^{\nu + 1} \}$. By the optimality condition,
\[ x^+ = y - \frac{1}{L_{\nu}^{\frac{1}{\nu}}} \| \nabla f (y) \|^{\frac{1 -
   \nu}{\nu}} \nabla f (y) . \]
Hence, it follows that
\begin{align}
  f (x^+) \leq \ &f (y) + \langle \nabla f (y), - \frac{1}{L_{\nu}^{\frac{1}{\nu}}}
  \| \nabla f (y) \|^{\frac{1 - \nu}{\nu}} \nabla f (y) \rangle +
  \frac{L_{\nu}}{\nu + 1} \| \frac{1}{L_{\nu}^{\frac{1}{\nu}}} \| \nabla f (y)
  \|^{\frac{1 - \nu}{\nu}} \nabla f (y) \|^{\nu + 1} \nonumber\\
  = \ &f (y) - \frac{\nu}{\nu + 1} \frac{1}{L_{\nu}^{\frac{1}{\nu}}} \| \nabla f (y)
  \|^{\frac{\nu + 1}{\nu}} . \nonumber
\end{align}
Next, define $\varphi (y) = f (y) - \langle \nabla f (x), y \rangle$. We have $x = \argmin_y \varphi (y)$ and
\begin{align}
  f (x) - \langle \nabla f (x), x \rangle = \ &\varphi (x) \nonumber\\
  \leq \ &\varphi \big( y - \frac{1}{L_{\nu}^{\frac{1}{\nu}}} \| \nabla f (y)
  \|^{\frac{1 - \nu}{\nu}} \big) \nonumber\\
  \leq \ &f (y) - \langle \nabla f (x), y \rangle - \frac{\nu}{\nu + 1}
  \frac{1}{L_{\nu}^{\frac{1}{\nu}}} \| \nabla f (x) - \nabla f (y) \|^{\frac{1 +
  \nu}{\nu}} . \nonumber
\end{align}
Rearranging the terms gives
\[ f (y) \geq f (x) + \langle \nabla f (x), y - x \rangle + \frac{\nu}{\nu
   + 1} \frac{1}{L_{\nu}^{\frac{1}{\nu}}} \| \nabla f (x) - \nabla f (y) \|^{\frac{1 +
   \nu}{\nu}} \]
and this completes the proof.

\subsection{Proof of Theorem \ref{thm:two times polyak}}
Using Lemma \ref{lemma:cocoercivity Holder inequality}, we deduce that
    \begin{align*}
        \|x^{k+1} - x^\star\|^2 = \ &\|x^{k} - 2\alpha_k \nabla f(x^k)- x^\star\|^2 \\
        = \ &\|x^{k} - x^\star\|^2 - 4 \alpha_k \langle \nabla f(x^k), x^{k} - x^\star \rangle + 4 \alpha_k^2 \|\nabla f(x^k)\|^2 \\
        \le \ &\|x^{k} - x^\star\|^2 - 4 \alpha_k \Big(f(x^k) - f^\star +
         \frac{\nu}{\nu + 1} \frac{1}{L_{\nu}^{\frac{1}{\nu}}} \| \nabla f (x^k)\|^{\frac{\nu + 1}{\nu}}\Big) + 4 \alpha_k^2 \|\nabla f(x^k)\|^2  \\
        = \ &\|x^{k} - x^\star\|^2 - \frac{4\nu}{\nu + 1} \frac{1}{L_{\nu}^{\frac{1}{\nu}}} (f(x^k) - f^\star)\|\nabla f(x^k)\|^{\frac{1 - \nu}{\nu}}.
    \end{align*}
    By a similar argument in the proof of Theorem \ref{thm:Holder smooth growth}, we have
    \begin{align*}
        \D_{k+1}^2 \le \D_k^2 - \frac{4\nu}{\nu + 1} \frac{\Delta_k}{L_{\nu}^{\frac{1}{\nu}}} \|\nabla f(x^k)\|^{\frac{1 - \nu}{\nu}},
    \end{align*}
    where $\D_k = \dist(x^k, \cX^\star)$ and $\Delta_k = f(x^k) - f^\star$. Note that due to convexity and Fej\'{e}r monotonicity, we have $\Delta_k \le \|\nabla f(x^k)\| \|x^k - x^\star\| \le \|\nabla f(x^k)\| \|x^0 - x^\star\|$ for any $x^\star \in \cX^\star$. Since $\nu \in (0,1]$, substituting this inequality back yields
    \begin{align*}
        \D_{k+1}^2 \le \D_{k}^2 - \frac{4\nu}{\nu + 1} \frac{\D_0^{\frac{\nu - 1}{\nu}}}{L_{\nu}^{\frac{1}{\nu}}} \Delta_k^\frac{1}{\nu}.
    \end{align*}
    By summining over $k$ from $k = 0$ to $k = K - 1$, we obtain
    \begin{align*}
        \sum_{k=0}^{K-1} \Delta_k^\frac{1}{\nu} \le \frac{(\nu + 1)L_{\nu}^{\frac{1}{\nu}}\D_0^{\frac{1 - \nu}{\nu}}}{4\nu}D_0^2,
    \end{align*}
    which immediately leads to
    \begin{align*}
        \min_{1 \le k \le K} \ f(x^k) - f^\star \le \Big(\frac{\nu + 1}{4\nu}\Big)^\nu \frac{L_\nu \D_0^{\nu+1}}{K^\nu}.
    \end{align*}
    This completes the proof.

\subsection{Proof of Theorem \ref{thm:gradient norm convergence}}
Using Lemma \ref{lemma:cocoercivity Holder inequality}, we deduce 
\begin{align*}
    \|x^{k+1} - x^\star\|^2 = \ &\|x^{k} - \gamma \cdot \alpha_k \nabla f(x^k)- x^\star\|^2 \\
        = \ &\|x^{k} - x^\star\|^2 - 2\gamma \cdot \alpha_k \langle \nabla f(x^k), x^{k} - x^\star \rangle + \gamma^2 \cdot \alpha_k^2 \|\nabla f(x^k)\|^2 \\
        \le \ &\|x^{k} - x^\star\|^2 - 2\gamma \cdot \alpha_k\Big(f(x^k) - f^\star +
         \frac{\nu}{\nu + 1} \frac{1}{L_{\nu}^{\frac{1}{\nu}}} \| \nabla f (x^k)\|^{\frac{\nu + 1}{\nu}}\Big) + \gamma^2 \cdot \alpha_k^2 \|\nabla f(x^k)\|^2  \\
        = \ &\|x^{k} - x^\star\|^2 - \gamma(2-\gamma)\frac{(f(x^k) - f^\star)^2}{\|\nabla f(x^k)\|^2} - 2\gamma \frac{f(x^k) - f^\star}{\|\nabla f(x^k)\|^2}  \frac{\nu}{\nu + 1} \frac{1}{L_{\nu}^{\frac{1}{\nu}}} \| \nabla f (x^k)\|^{\frac{\nu + 1}{\nu}} \\
        \le \ &\|x^{k} - x^\star\|^2 - \frac{2\gamma\nu}{\nu + 1} \frac{1}{L_{\nu}^{\frac{1}{\nu}}}(f(x^k) - f^\star)\| \nabla f (x^k)\|^{\frac{1 - \nu}{\nu}}
\end{align*}
Then by substituting the inequality in Lemma \ref{lemma:cocoercivity Holder inequality} with $x = x^\star$ and $y = x^k$, it holds that
\begin{align*}
    \Delta_k = f(x^k) - f^\star \ge \frac{\nu}{\nu + 1} \frac{1}{L_{\nu}^{\frac{1}{\nu}}} \| \nabla f (x^k)\|^{\frac{1 + \nu}{\nu}}.
\end{align*}
Consequently, we have
\begin{align*}
    \frac{2\gamma\nu^2}{(\nu + 1)^2} \frac{1}{L_{\nu}^{\frac{2}{\nu}}} \| \nabla f (x^k)\|^{\frac{2}{\nu}} \le \|x^k - x^\star\|^2 - \|x^{k+1} - x^\star\|^2 \le \|x^k - x^\star\|^2 - \D_{k+1}^2.
\end{align*}
Since $x^\star$ is taken arbitrarily, it implies
\begin{align*}
    \frac{2\gamma\nu^2}{(\nu + 1)^2} \frac{1}{L_{\nu}^{\frac{2}{\nu}}} \| \nabla f (x^k)\|^{\frac{2}{\nu}} \le \D_k^2 - \D_{k+1}^2.
\end{align*}
Summing over $k$ from $k = 0$ to $k = K-1$ and dividing $K$ on both sides, we conclude
\begin{align*}
    \min_{1 \le k \le K} \ \|\nabla f(x^k)\| \le \frac{(\nu+1)^{\nu} L_\nu \D_0^{\nu}}
     {2^{\frac{\nu}{2}} \gamma^{\frac{\nu}{2}} \nu^{\nu} K^{\frac{\nu}{2}}}.
\end{align*}
This completes the proof.

\subsection{Proof of Theorem \ref{thm:gcb convergence}}

    Let $\hat{\sigma}_f(t) \coloneqq \int_0^t \frac{\hat{\mu}(\tau)}{\tau} \d \tau$. By Theorem 1 in \cite{nesterov2025universal}, it holds that
    \begin{equation}\label{eq:gcb upper bound}
        \big|f(y) - f(x) - \langle \nabla f(x), y-x \rangle\big| \le 2\hat{\sigma}_f(t) - \hat{\mu}_f(t) + \frac{\hat{\mu}_f(t)}{t^2}\|y-x\|^2
    \end{equation}
    for any $x, y \in \reals^n$ and $t \in [0, \infty)$. We first establish the following upper bound for the gradient norm:
    \begin{equation}\label{eq:upper bound of gradient gcb}
        \|\nabla f(x)\| \le \inf_{t \ge 0} \ \frac{f(x) - f^\star + 2\hat{\sigma}_f(t)}{t},
    \end{equation}
    which can be viewed as a counterpart to the second inequality in Equation \eqref{eq:Holder smooth inequalities}. Fix $\zeta \in [0, \infty)$ and $x \in \reals^n$. By substituting $y = x - \zeta \cdot \frac{\nabla f(x)}{\|\nabla f(x)\|}$ and $t = \|x - y\| = \zeta$ into inequality \eqref{eq:gcb upper bound}, it yields
    \begin{align*}
        f(y) - f(x) + \zeta \|\nabla f(x)\| \le 2\hat{\sigma}_f(\zeta) - \hat{\mu}_f(\zeta) + \hat{\mu}_f(\zeta) = 2\hat{\sigma}_f(\zeta),
    \end{align*}
    which further implies $f(x) - f^\star \ge \zeta\|\nabla f(x)\| - 2\hat{\sigma}_f(\zeta)$ due to the fact that $f(y) \ge f^\star$. Since $\zeta$ is chosen arbitrarily, we have
    \begin{align*}
        f(x) - f^\star \ge \sup_{\zeta \ge 0} \ \zeta \|\nabla f(x)\| - 2\hat{\sigma}_f(\zeta) = (2\hat{\sigma}_f)^*(\|\nabla f(x)\|),
    \end{align*}
    where $(2\hat{\sigma}_f)^*$ is the Fenchel conjugate function of $2\hat{\sigma}_f$ (see Definition 3.22 in \cite{drusvyatskiy2020convex}). Consequently, by the Fenchel-Young inequality, it follows $\zeta \|\nabla f(x)\| \le 2\hat{\sigma}_f(\zeta) + (2\hat{\sigma}_f)^*(\|\nabla f(x)\|)$. Note that $f(x) - f^\star \ge (2\hat{\sigma}_f)^*(\|\nabla f(x)\|)$. We derive
    \begin{align*}
        \zeta \|\nabla f(x)\| \le 2\hat{\sigma}_f(\zeta) + f(x) - f^\star.
    \end{align*}
    Again, recall that $\zeta$ is arbitrary. This proves Equation \eqref{eq:upper bound of gradient gcb}. Since the function $f$ is convex, by Lemma 4 in \cite{nesterov2025universal}, it holds that $\hat{\sigma}_f(t) \le \hat{\mu}_f(t)$ for any $t \in [0, \infty)$. From Equation \eqref{eq:upper bound of gradient gcb}, we obtain
    \begin{align*}
         \|\nabla f(x)\| \le \inf_{t \ge 0} \ \frac{f(x) - f^\star + 2\hat{\mu}_f(t)}{t} \le \frac{f(x) - f^\star + 2\hat{\mu}_f(s_f(f(x) - f^\star))}{s_f(f(x) - f^\star)}.
    \end{align*}
    Since the complexity gauge is the inverse of the global curvature bound, it follows  $\hat{\mu}_f(s_f(f(x) - f^\star)) = f(x) - f^\star$, which leads to
    \begin{align*}
        \|\nabla f(x)\| \le \frac{3(f(x) - f^\star) }{s_f(f(x) - f^\star)}.
    \end{align*}
Combining the above inequality with Equation \eqref{eq:basic inequality}, we have
    \begin{align*}
        \D_{k+1}^2 \le \D_k^2 - \gamma(2-\gamma) \frac{\Delta_k^2}{\|\nabla f(x^k)\|^2} \le \D_k^2 - \frac{s_f^2(\Delta_k)}{9},
    \end{align*}
    where $\D_k = \dist(x^k, \cX^\star)$ and $\Delta_k = f(x^k) - f^\star$. Summing the relation over $k$ from $k = 0$ to $k = K-1$ gives
    \begin{align*}
        \frac{1}{K}\sum_{k=0}^{K-1} s_f^2(\Delta_k) \le \frac{9\D_0^2}{K}.
    \end{align*}
    It follows that 
    \begin{align*}
        \min_{1 \le k \le K} \ s_f(\Delta_k) \le \frac{3\D_0}{\sqrt{K}},
    \end{align*}
    which, by taking inverse on both sides, implies
    \begin{align*}
        \min_{1 \le k \le K} \ f(x^k) - f^\star \le \hat{\mu}_f\Big(\frac{3 \D_0}{\sqrt{K}}\Big).
    \end{align*}
Setting $K \ge (3\D_0/s_f(\varepsilon))^2$ ensures  $\hat{\mu}_f(3\D_0/\sqrt{K}) \le \varepsilon$ and completes the proof.

\subsection{Proof of Theorem \ref{thm:stochastic-polyak}}

Define $f^\star_\xi \assign \min_x f(x, \xi)$. The interpolation condition implies $\mathbb{E}[f^\star_\xi] = f^\star$. Using the update formula of the Polyak stepsize, we deduce that
\begin{align}
  \| x^{k + 1} - x^{\star} \|^2 ={} & \| x^k - x^{\star} \|^2 - 2 \gamma \cdot
  \alpha_k \langle \nabla f (x^k, \xi^k), x^k - x^{\star} \rangle + \gamma^2
  \cdot \alpha_k^2 \| \nabla f (x^k, \xi^k) \|^2 \nonumber\\
  \leq{} & \| x^k - x^{\star} \|^2 - 2 \gamma \cdot \alpha_k (f (x^k, \xi^k) -
  f^\star_{\xi^k}) + \gamma^2 \cdot \alpha_k^2 \| \nabla f (x^k, \xi^k) \|^2
  \nonumber\\
  ={} & \| x^k - x^{\star} \|^2 - \gamma (2 - \gamma) \frac{(f (x^k, \xi^k) -
  f^\star_{\xi^k})^2}{\| \nabla f (x^k, \xi^k) \|^2} . \nonumber
\end{align}
With the stochastic version of Equation \eqref{eq:Holder smooth inequalities}:
\[ \| \nabla f (x^k, \xi^k) \|^{\frac{\nu + 1}{\nu}} \leq \frac{\nu + 1}{\nu}
   L_{\nu}^{\frac{1}{\nu}} (f (x^k, \xi^k) - f^\star_{\xi^k}) \]
we deduce that
\begin{align}
  \| x^{k + 1} - x^{\star} \|^2 \leq{} & \| x^k - x^{\star} \|^2 - \gamma (2 -
  \gamma) \frac{(f (x^k, \xi^k) - f^\star_{\xi^k})^2}{\| \nabla f (x^k, \xi^k) \|^2}
  \nonumber\\
  \leq{} & \| x^k - x^{\star} \|^2 - \gamma (2 - \gamma) \frac{(f (x^k, \xi^k) -
  f^\star_{\xi^k})^2}{\left( \frac{\nu + 1}{\nu} \right)^{\frac{2 \nu}{\nu + 1}}
  L_{\nu}^{\frac{2}{\nu + 1}} (f (x^k, \xi^k) - f^\star_{\xi^k})^{\frac{2 \nu}{\nu
  + 1}}} \nonumber\\
  ={} & \| x^k - x^{\star} \|^2 - \frac{\gamma (2 - \gamma) \nu^{\frac{2
  \nu}{\nu + 1}}}{(\nu + 1)^{\frac{2 \nu}{\nu + 1}} L_{\nu}^{\frac{2}{\nu +
  1}}} (f (x^k, \xi^k) - f^\star_{\xi^k})^{\frac{2}{\nu + 1}} \nonumber
\end{align}
Taking the conditional expectation on both sides, we have
\begin{align}
  \mathbb{E} [\| x^{k + 1} - x^{\star} \|^2 |x^k] \leq{} & \| x^k - x^{\star}
  \|^2 - \frac{\gamma (2 - \gamma) \nu^{\frac{2 \nu}{\nu + 1}}}{(\nu +
  1)^{\frac{2 \nu}{\nu + 1}} L_{\nu}^{\frac{2}{\nu + 1}}} \mathbb{E} \left[
  (f (x^k, \xi^k) - f^\star_{\xi^k})^{\frac{2}{\nu + 1}} | x^k \right] \nonumber\\
  \leq{} & \| x^k - x^{\star} \|^2 - \frac{\gamma (2 - \gamma) \nu^{\frac{2
  \nu}{\nu + 1}}}{(\nu + 1)^{\frac{2 \nu}{\nu + 1}} L_{\nu}^{\frac{2}{\nu +
  1}}} \mathbb{E} [f (x^k, \xi^k) - f^\star_{\xi^k} | x^k ]^{\frac{2}{\nu + 1}}  \label{eqn:interpo-1} \\
  ={} & \| x^k - x^{\star} \|^2 - \frac{\gamma (2 - \gamma) \nu^{\frac{2
  \nu}{\nu + 1}}}{(\nu + 1)^{\frac{2 \nu}{\nu + 1}} L_{\nu}^{\frac{2}{\nu +
  1}}}[f (x^k) - f^{\star}]^{\frac{2}{\nu + 1}} \label{eqn:interpo-2} \\
  ={} & \| x^k - x^{\star} \|^2 - \frac{\gamma (2 - \gamma) \nu^{\frac{2
  \nu}{\nu + 1}}}{(\nu + 1)^{\frac{2 \nu}{\nu + 1}} L_{\nu}^{\frac{2}{\nu +
  1}}} \Delta_k^{\frac{2}{\nu + 1}}, \nonumber
\end{align}
where \eqref{eqn:interpo-1} uses Jensen's inequality $\mathbb{E} [ X^{\frac{2}{\nu + 1}}
] \geq \mathbb{E} [X]^{\frac{2}{\nu + 1}}$ for nonnegative random variable $X$ and $\nu \in (0, 1]$; \eqref{eqn:interpo-2} uses the interpolation condition $\mathbb{E}[f(x, \xi) - f_{\xi}^\star | x] = f(x) - f^\star$.With the H{\"o}lder growth condition, we have
\[ \mathbb{E} [\| x^{k + 1} - x^{\star} \|^2 |x^k] \leq \| x^k - x^{\star}
   \|^2 - \frac{\gamma (2 - \gamma) \nu^{\frac{2 \nu}{\nu + 1}}
   \rho_r^{\frac{2}{\nu + 1}}}{(\nu + 1)^{\frac{2 \nu}{\nu + 1}}
   L_{\nu}^{\frac{2}{\nu + 1}}} \| x^k - x^{\star} \|^{\frac{2 r}{\nu + 1}}.
\]
Taking total expectation and using $r \geq \nu + 1$, we have
\[ \mathbb{E} \left[ \| x^k - x^{\star} \|^{\frac{2 r}{\nu + 1}} \right]
   =\mathbb{E} \left[ (\| x^k - x^{\star} \|^2)^{\frac{r}{\nu + 1}} \right]
   \geq \mathbb{E} [\| x^k - x^{\star} \|^2]^{\frac{r}{\nu + 1}}, \]
which implies
\begin{align}
  \mathbb{E} [\| x^{k + 1} - x^{\star} \|^2] \leq{} & \mathbb{E} [\| x^k -
  x^{\star} \|^2] - \frac{\gamma (2 - \gamma) \nu^{\frac{2 \nu}{\nu + 1}}
  \rho_r^{\frac{2}{\nu + 1}}}{(\nu + 1)^{\frac{2 \nu}{\nu + 1}}
  L_{\nu}^{\frac{2}{\nu + 1}}} \mathbb{E} \left[ \| x^k - x^{\star}
  \|^{\frac{2 r}{\nu + 1}} \right] \nonumber\\
  \leq{} & \mathbb{E} [\| x^k - x^{\star} \|^2] - \frac{\gamma (2 - \gamma)
  \nu^{\frac{2 \nu}{\nu + 1}} \rho_r^{\frac{2}{\nu + 1}}}{(\nu + 1)^{\frac{2
  \nu}{\nu + 1}} L_{\nu}^{\frac{2}{\nu + 1}}} \mathbb{E} [\| x^k - x^{\star}
  \|^2]^{\frac{r}{\nu + 1}} . \nonumber
\end{align}
Define $\mathbb{D}_k \assign \mathbb{E} [\| x^k - x^{\star} \|^2]$. We have exactly the same recursive relation as in \eqref{eq:Holder smooth growth inequality}:
\[ \mathbb{D}_{k + 1} \leq \mathbb{D}_k - \frac{\gamma (2 - \gamma)
   \nu^{\frac{2 \nu}{\nu + 1}} \rho_r^{\frac{2}{\nu + 1}}}{(\nu + 1)^{\frac{2
   \nu}{\nu + 1}} L_{\nu}^{\frac{2}{\nu + 1}}} \mathbb{D}_k^{\frac{r}{\nu +
   1}}, \]
and the same results from the analysis of the deterministic case. Using
\[ \frac{\gamma (2 - \gamma) \nu^{\frac{2 \nu}{\nu + 1}}}{(\nu + 1)^{\frac{2
   \nu}{\nu + 1}} L_{\nu}^{\frac{2}{\nu + 1}}} \mathbb{E}
   [\Delta_k]^{\frac{2}{\nu + 1}} \leq \frac{\gamma (2 - \gamma) \nu^{\frac{2
   \nu}{\nu + 1}}}{(\nu + 1)^{\frac{2 \nu}{\nu + 1}} L_{\nu}^{\frac{2}{\nu +
   1}}} \mathbb{E} \big[ \Delta_k^{\frac{2}{\nu + 1}} \big] \leq
   \mathbb{D}_k -\mathbb{D}_{k + 1}, \]
following Theorem \ref{thm:Holder smooth growth} completes the proof.

\end{document}